\newcommand{\NN}{\mathbb{N}}
\newcommand{\ZZ}{\mathbb{Z}}
\newcommand{\vC}{\mathcal{C}}
\newcommand{\vE}{\mathcal{E}}
\newcommand{\vH}{\mathcal{H}}
\newcommand{\vI}{\mathcal{I}}
\newcommand{\vS}{\mathcal{S}}
\newcommand{\fI}{\mathfrak{I}}
\newcommand{\fP}{\mathfrak{P}}
\newcommand{\im}{\operatorname{im}}
\newcommand{\Mod}{\operatorname{Mod}}
\newcommand{\Ob}{\operatorname{Ob}}
\newcommand{\sgn}{\operatorname{sgn}}
\newcommand{\id}{\text{id}}
\newcommand{\lra}{\longrightarrow}
\newcommand{\ol}[1]{\overline{#1}}
\newcommand{\sm}{\setminus}
\newcommand{\wt}[1]{\widetilde{#1}}
\newcommand{\eand}{\quad \text{ and } \quad}
\definecolor{lightgrey}{gray}{.85}
\theoremstyle{plain}
\newtheorem{thm}{Theorem}[section]
\newtheorem{lem}[thm]{Lemma}
\newtheorem{cor}[thm]{Corollary}
\newtheorem{prop}[thm]{Proposition}
\newtheorem{const}[thm]{Construction}
\newtheorem{quest}[thm]{Question}
\theoremstyle{definition}
\newtheorem{defn}[thm]{Definition}
\newtheorem{rmk}[thm]{Remark}
\newtheorem{eg}[thm]{Example}
\newtheorem{problem}[thm]{Problem}
\renewcommand{\emph}{\bf}
\title{circularly ordering direct products and the obstruction to left-orderability}
\author[Adam Clay]{Adam Clay}
\thanks{Adam Clay was partially supported by NSERC grants RGPIN-2014-05465 and RGPIN-05343-2020}
\address{Department of Mathematics\\
University of Manitoba \\
Winnipeg \\
MB Canada R3T 2N2} \email{Adam.Clay@umanitoba.ca}
\urladdr{http://server.math.umanitoba.ca/~claya/} 
\author[Tyrone Ghaswala]{Tyrone Ghaswala}
\thanks{Tyrone Ghaswala was partially supported by a PIMS postdoctoral fellowship at the University of Manitoba and a CIRGET postdoctoral fellowship at L'Universit\'{e} du Qu\'{e}bec \`{a} Montr\'{e}al}
\address{D\'{e}partement de Math\'{e}matiques\\
Universit\'{e} du Qu\'{e}bec \`{a} Montr\'{e}al \\
Montr\'{e}al \\
QC Canada H2X 3Y7} \email{ty.ghaswala@gmail.com}
\urladdr{https://server.math.umanitoba.ca/~ghaswalt/} 
\date{\today}
\begin{document}

\maketitle

\begin{abstract}
Motivated by the recent result that left-orderability of a group $G$ is intimately connected to circular orderability of direct products $G \times \ZZ/n\ZZ$, we provide necessary and sufficient cohomological conditions that such a direct product be circularly orderable. As a consequence of the main theorem, we arrive at a new characterization for the fundamental group of a rational homology 3-sphere to be left-orderable. Our results imply that for mapping class groups of once-punctured surfaces, and other groups whose actions on $S^1$ are cohomologically rigid, the products $G \times \ZZ/n\ZZ$ are seldom circularly orderable. We also address circular orderability of direct products in general, dealing with the cases of factor groups admitting a bi-invariant circular ordering, and iterated direct products whose factor groups are amenable.
\end{abstract}

\section{Introduction}

A group $G$ is called {\it left-orderable} if it admits a strict total ordering $<$ of its elements such that $g<h$ implies $fg <fh$ for all $f, g, h \in G$.   When $G$ is countable, it is left-orderable if and only if there exists an embedding of $G$ into $\mathrm{Homeo}_+(\mathbb{R})$. Related to this idea, a group $G$ is {\it circularly-orderable} if and only if it admits an inhomogeneous $2$-cocycle  $f:G^2 \rightarrow \ZZ$ having properties that more or less encode whether or not the group can be ``arranged in a circle" in such a way that left-multiplication preserves the relative position of the elements of $G$ (See Section \ref{prelims} for a strict definition of this notion).  When restricted to the class of countable groups, circular orderability of $G$ is equivalent to the existence of an embedding of $G$ into $\mathrm{Homeo}_+(S^1)$.  

Either from the algebraic definitions or from their dynamical counterparts, it is straightforward to see that left-orderable, groups, therefore form a proper subcollection of the collection of circularly-orderable groups: every left-ordered group is circularly-orderable in a way that is akin to the way in which the one-point compactification of $\mathbb{R}$ yields a circle.  An easy way to see that the containment is proper is to observe that circularly-orderable groups can admit torsion, while left-orderable groups are torsion free.

A natural question, then, is to ask under what conditions a torsion-free circularly-orderable group is necessarily left-orderable.  This question was the focus of \cite{BCGpreprint}, which reviews the classical cohomological ways of detecting left-orderability of a given circularly-ordered group, and also offers up the following equivalence.

\begin{thm}\cite[Corollary 3.14]{BCGpreprint}
\label{thm:motivation}
A group $G$ is left-orderable if and only if $G \times \ZZ / n \ZZ $ is circularly-orderable for all $n \geq 2$.
\end{thm}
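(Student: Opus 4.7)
The plan is to prove each direction separately: the forward direction by an explicit construction, and the (harder) reverse direction by translating to cohomological data and applying a compactness argument.

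For the forward direction, starting from a left ordering $<$ of $G$, I build a left-invariant circular ordering on $G \times \ZZ/n\ZZ$ by a stacking construction: picture $n$ copies of $G$, indexed by $k \in \ZZ/n\ZZ$, arranged on $n$ consecutive arcs of $S^1$, each copy internally placed according to $<$. Formally, for three distinct elements $(g_i, k_i)$, one first compares the $k_i$ in the cyclic order of $\ZZ/n\ZZ$, breaking ties via $<$ on $G$. Left multiplication by $(h,j)$ cyclically rotates the arcs by $j$ and, within each arc, applies the $<$-preserving bijection $g \mapsto hg$; both steps preserve the orientation of $S^1$, so the resulting ternary relation is left-invariant.

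For the reverse direction, I would first record the cohomological dictionary. A circular ordering on $G$ has an Euler class in $H^2(G;\ZZ)$, and a standard argument---pick an embedding $G \times \ZZ/n\ZZ \hookrightarrow \Homeo_+(S^1)$, conjugate so that $\ZZ/n\ZZ$ acts by rotations, quotient, and pass back through the $n$-fold cover---shows that $G \times \ZZ/n\ZZ$ is circularly-orderable if and only if $G$ admits a circular ordering whose Euler class lies in $n \cdot H^2(G;\ZZ)$ (equivalently, reduces to zero in $H^2(G;\ZZ/n\ZZ)$). The hypothesis thus yields, for each $n \geq 2$, a circular ordering $\sigma_n$ on $G$ with Euler class divisible by $n$. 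Viewing the set of circular orderings on $G$ as a closed subset of $\{-1,0,+1\}^{G^3}$ (compact by Tychonoff), the subsets with Euler class in $n H^2(G;\ZZ)$ are closed and nested under divisibility, so a finite-intersection/compactness argument produces a limit ordering $\sigma$ whose Euler class is infinitely divisible.

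The hardest step is the last one: extracting from infinite divisibility an actual vanishing, so that $\sigma$ arises from a left ordering. Since $H^2(G;\ZZ)$ can carry nontrivial infinitely divisible elements, compactness alone is insufficient; the argument must use extra input. Two natural routes are (i) working in bounded cohomology---where the explicit bound on the Euler cocycle makes infinitely divisible classes much more controlled---or (ii) a direct dynamical construction: take the $S^1$-actions $\rho_n \colon G \to \Homeo_+(S^1)$ realizing $\sigma_n$, lift each through the $n$-fold cover to an action on the universal cover $\RR$, normalize by a common basepoint, and extract a pointwise-convergent subsequence whose limit lands in $\Homeo_+(\RR)$. Identifying the resulting limit as a \emph{faithful}, order-preserving action of $G$ on $\RR$---which witnesses left-orderability---is where I expect the bulk of the work to lie.
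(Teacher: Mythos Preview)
This theorem is not proved in the paper; it is quoted from \cite{BCGpreprint} and used as input. In fact the paper runs the logic in the opposite direction from your attempt: it establishes the cohomological equivalence (Theorem~\ref{cor:main}: $G\times\ZZ/n\ZZ$ is circularly-orderable iff $G$ carries a circular ordering whose class in $H^2(G;\ZZ)$ is $n$-divisible) and then \emph{combines} that with the cited Theorem~\ref{thm:motivation} to obtain Theorem~\ref{thm:ob_characterization}. So there is no in-paper proof to compare against, though your intermediate cohomological step is exactly the paper's main theorem. Your forward direction is correct and is the lexicographic construction recalled in Section~\ref{sec:lo-co-basics}.

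For the reverse direction there are two genuine gaps. First, your claim that the set of circular orderings with Euler class in $nH^2(G;\ZZ)$ is closed in $\{-1,0,+1\}^{G^3}$ is unjustified and not obviously true: the map from a circular ordering to its class in ordinary cohomology need not be locally constant, so the preimage of $nH^2(G;\ZZ)$ need not be closed, and the finite-intersection argument does not go through as stated. Second, and more seriously---as you yourself flag---even if compactness produced a single circular ordering whose Euler class is divisible by every $n$, this does not force the class to vanish (so Remark~\ref{rmk:id-implies-lo} cannot be invoked), because $H^2(G;\ZZ)$ may contain nonzero divisible elements. Your alternative route~(ii), lifting the actions through $n$-fold covers and extracting a limit in $\Homeo_+(\RR)$, is closer in spirit to how such results are actually proved, but it is only a sketch: faithfulness of the limit action is exactly where the content lies, and you have not supplied an argument (note also that the dynamical framing via $\Homeo_+(S^1)$ tacitly assumes $G$ is countable, whereas the paper's algebraic proof of Theorem~\ref{cor:main} does not). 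As written, the reverse direction is incomplete.
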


This result effectively transforms the task of left-ordering a given group $G$ into a special subcase of the more general problem of circularly-ordering direct products of groups $G \times H$ when both factors are circularly-orderable.  
While direct products are easy to deal with in certain cases (e.g. when at least one factor is left-orderable, the direct product can be circularly-ordered in a lexicographic manner), in general their behavior with respect to circular orderability is subtle.   The direct product of two circularly-orderable groups is sometimes circularly-orderable, other times not.  For example the group $\ZZ/ n\ZZ \times \ZZ/ n\ZZ$ is never circularly-orderable, since all finite circularly-orderable groups are cyclic (Proposition \ref{prop:finite-CO-is-cyclic}).  To the best of our knowledge, few results exist that address the following problem.

\begin{problem}
\label{prob:direct_products}
Given circularly-orderable groups $G$ and $H$, determine necessary and sufficient conditions that guarantee the product $G \times H$ is circularly-orderable.
\end{problem}

\subsection{Main results}

Motivated by Theorem \ref{thm:motivation}, in this paper we deal with the special case of Problem \ref{prob:direct_products} where one factor is cyclic. Call $\alpha \in H^2(G; \mathbb{Z})$ $n$-divisible if there exists $\mu \in H^2(G; \mathbb{Z})$ with $\alpha =n\mu$, and call $S \subset H^2(G; \mathbb{Z})$ fully divisible if it contains an $n$-divisible element for each $n \geq 2$. Note that $0 \in H^2(G;\ZZ)$ is fully divisible.

\begin{thm} [\text{cf.} Theorem \ref{cor:main}]
\label{thm:main_tool}
The group $G \times \ZZ/n\ZZ$ is circularly-orderable if and only if there exists a circular ordering $f$ on $G$ such that $[f] \in H^2(G;\ZZ)$ is $n$-divisible.
\end{thm}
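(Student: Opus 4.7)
The plan is to invoke the correspondence between circular orderings on $G$ and central extensions $1 \to \ZZ \to \hat G \to G \to 1$ that are left-orderable with cofinal central $\ZZ$; under this correspondence, the Euler class of a circular ordering $f$ on $G$ agrees with $[f] \in H^2(G;\ZZ)$.

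For $(\Leftarrow)$, suppose $f$ is a circular ordering on $G$ with $[f] = n\mu$, and pick a cocycle $\nu$ representing $\mu$. The central extension determined by the cocycle $n\nu$ is isomorphic (as an extension) to the one determined by $f$, so transporting the left-ordering along this isomorphism yields a left-orderable group structure on $G \times \ZZ$ with product $(g_1, k_1)(g_2, k_2) = (g_1 g_2,\, k_1 + k_2 + n\nu(g_1, g_2))$ and cofinal central $\ZZ$. Reducing the second coordinate modulo $n$ then defines a surjective homomorphism onto $G \times \ZZ/n\ZZ$ whose kernel $n\ZZ$ has finite index in the cofinal $\ZZ$ and is therefore itself cofinal. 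Applying the correspondence in reverse produces the desired circular ordering on $G \times \ZZ/n\ZZ$.

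For $(\Rightarrow)$, let $\pi : E \to G \times \ZZ/n\ZZ$ be a left-orderable central extension with cofinal kernel $K \cong \ZZ$, and set $\tilde G := \pi^{-1}(G)$ and $\ZZ' := \pi^{-1}(\ZZ/n\ZZ)$. The commutator pairings into $K$ induced by $\pi$ are bilinear and factor through tensor products with $\ZZ/n\ZZ$; torsion-freeness of $K \cong \ZZ$ forces them to vanish, so $\ZZ'$ is abelian and commutes with $\tilde G$ in $E$. Being a finitely generated torsion-free abelian extension of $\ZZ/n\ZZ$ by $\ZZ$, the group $\ZZ'$ must be isomorphic to $\ZZ$ with $K$ embedded as $n\ZZ$. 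Since $\tilde G \cdot \ZZ' = E$ and $\ZZ'$ is central, the quotient $E \to E/\ZZ' \cong G$ is a central extension by $\ZZ'$ in which $\ZZ'$ contains the cofinal $K$ and is itself cofinal; the correspondence then supplies a circular ordering $f$ on $G$ with Euler class $[f] = [E \to G] \in H^2(G;\ZZ)$. Finally, the inclusion $\tilde G \hookrightarrow E$ is a morphism of central extensions of $G$ whose kernel map $K \hookrightarrow \ZZ'$ is, after identifying $K \cong \ZZ \cong \ZZ'$, multiplication by $n$; pushout functoriality of the Euler class yields $[f] = n \cdot [\tilde G]$, so $[f]$ is $n$-divisible.

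The principal obstacle lies in the $(\Rightarrow)$ direction: realizing $E$ as the central product $\tilde G \cdot \ZZ'$ amalgamated over $K$, and translating this structure into the Euler-class identity $[f] = n[\tilde G]$. The torsion-vs.-torsion-free vanishing of the commutator pairing is the key structural input, and the final step is a standard functoriality argument for pushouts of central extensions in group cohomology.
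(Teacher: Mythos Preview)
Your argument is correct, and the overall architecture—passing through the left-ordered $\ZZ$-central extension, identifying a larger cofinal central $\ZZ'$ inside it, and reading off divisibility from a morphism of extensions—matches the paper's strategy closely. In particular, your centrality argument for $\ZZ'$ is exactly Proposition~\ref{prop:central-cyclic-subgroup}, and your $(\Leftarrow)$ direction is the content of Proposition~\ref{prop:circularly-orderable-finite-extension} together with the observation that the $\ZZ/n\ZZ$-extension splits.

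The one genuine organizational difference is in how the divisibility is extracted. The paper factors the problem through $\ZZ/n\ZZ$-coefficients: it first shows (Lemma~\ref{thm:main}) that $G\times\ZZ/n\ZZ$ is circularly-orderable iff some $[f]$ lies in $\ker\bigl(\fP_n:H^2(G;\ZZ)\to H^2(G;\ZZ/n\ZZ)\bigr)$, and then invokes the long exact sequence associated to $0\to\ZZ\xrightarrow{n}\ZZ\to\ZZ/n\ZZ\to 0$ to identify $\ker\fP_n$ with $nH^2(G;\ZZ)$. You instead stay entirely with $\ZZ$-coefficients and read $[f]=n[\tilde G]$ directly from the commuting square of extensions
\[
\begin{array}{ccccccccc}
1 & \lra & K & \lra & \tilde G & \lra & G & \lra & 1 \\
  &      & \downarrow &  & \downarrow &  & \| &  & \\
1 & \lra & \ZZ' & \lra & E & \lra & G & \lra & 1
\end{array}
\]
via change-of-coefficients (pushout) functoriality. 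These are two faces of the same coin—the long exact sequence is precisely the cohomological shadow of this pushout—so neither route is more general, but your formulation has the minor advantage of never leaving the category of $\ZZ$-extensions, while the paper's has the advantage of isolating the intermediate criterion $[f]\in\ker\fP_n$, which is reused elsewhere.
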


This opens several avenues of investigation that we pursue in the later sections of this paper.  First, following \cite{BCGpreprint}, we define the \textit{obstruction spectrum} of a group $G$ to be 
\[
\Ob(G) = \{  n \in \NN_{\geq 2} \mid G \times \mathbb{Z} /n \mathbb{Z} \mbox{ is not circularly-orderable} \},
\]
which by Theorem \ref{thm:motivation} is empty if and only if $G$ is left-orderable.  We use Theorem \ref{thm:main_tool} as a tool for calculating $\Ob(G)$, from which we give a cohomological characterization of the obstruction to left-orderability of a circularly-orderable group.  

\begin{thm}[\textit{c.f.} Theorem \ref{thm:ob_characterization}]
\label{thm:LO_characterization}
Let $G$ be a group, and set 
\[ C(G) = \{ \alpha \in H^2(G; \mathbb{Z}) \mid \mbox{there exists a circular ordering $f$ of $G$ with $[f] = \alpha$} \}.
\]
Then $G$ is left-orderable if and only if $C(G)$ is fully divisible.
\end{thm}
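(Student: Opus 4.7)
The plan is to derive Theorem \ref{thm:LO_characterization} as an essentially immediate consequence of chaining together Theorem \ref{thm:motivation} with Theorem \ref{thm:main_tool}, using the definition of the obstruction spectrum $\Ob(G)$ as a bridge. The key observation is that Theorem \ref{thm:main_tool} translates the statement ``$n \notin \Ob(G)$'' into the cohomological statement ``$C(G)$ contains an $n$-divisible element'', and Theorem \ref{thm:motivation} translates ``$G$ is left-orderable'' into ``$\Ob(G) = \emptyset$''. Putting these together gives the equivalence.

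For the forward direction, I would start by assuming $G$ is left-orderable. Then the ``lexicographic-style'' circular ordering obtained from any left ordering $<$ of $G$ is represented by a $2$-cocycle $f \colon G^2 \to \ZZ$ that is a coboundary (this is standard: left-orderable groups give rise to circular orderings whose cohomology class vanishes). Hence $0 \in C(G)$. Since $0 = n \cdot 0$ is $n$-divisible for every $n \geq 2$, the set $C(G)$ is fully divisible.

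For the reverse direction, suppose $C(G)$ is fully divisible. Then for every $n \geq 2$ there exists a circular ordering $f_n$ of $G$ such that $[f_n] \in H^2(G;\ZZ)$ is $n$-divisible. By Theorem \ref{thm:main_tool}, the direct product $G \times \ZZ/n\ZZ$ is circularly-orderable for every $n \geq 2$, i.e.\ $\Ob(G) = \emptyset$. Applying Theorem \ref{thm:motivation} then yields that $G$ is left-orderable.

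Because this argument is essentially a formal consequence of the two cited theorems, I do not expect any real obstacle; the only thing that needs to be pinned down is the standard fact that a left-ordered group admits a representing cocycle for its induced circular ordering which is a coboundary, so that $0 \in C(G)$. This is presumably recorded earlier in the paper (it is a known fact from \cite{BCGpreprint}) and can be invoked directly. The content of the theorem is thus entirely packaged into Theorem \ref{thm:main_tool}, and this corollary simply re-expresses the conclusion of Theorem \ref{thm:motivation} in cohomological language.
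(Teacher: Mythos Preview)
Your proposal is correct and follows essentially the same approach as the paper: the paper derives Theorem~\ref{thm:LO_characterization} from Theorem~\ref{thm:ob_characterization} (itself an immediate restatement of Theorem~\ref{cor:main}) together with Theorem~\ref{thm:motivation}, which is exactly your chain of implications. The only minor difference is that for the forward direction you argue directly that $0 \in C(G)$ when $G$ is left-orderable (a standard fact, and indeed noted in the paper's introduction), whereas the paper handles both directions uniformly via the equivalence $\Ob(G)=\emptyset \Leftrightarrow C(G)$ is fully divisible.
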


\subsection{Applications}

As a consequence of our main results, we are able to show that for groups whose actions on $S^1$ are cohomologically rigid, the obstruction spectrum contains many elements besides the expected elements that arise from torsion within the group. In particular, let $\Mod(S_{g,1})$ be the mapping class group of a genus $g$ surface with one puncture. The group $\Mod(S_{g,1})$ is circularly-orderable, and we show the following:

\begin{thm}[\textit{c.f.} Corollary \ref{cor:mod-ob}]
Let $g \geq 2$. The direct product $\Mod(S_{g,1}) \times \ZZ/n\ZZ$ is not circularly-orderable for all $n \geq 2$.
\end{thm}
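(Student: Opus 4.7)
The plan is to invoke Theorem \ref{thm:main_tool}, which translates the statement into showing that for $G := \Mod(S_{g,1})$, no circular ordering $f$ of $G$ has an $n$-divisible class $[f] \in H^2(G;\ZZ)$ for any $n \geq 2$. Equivalently, in the notation of Theorem \ref{thm:LO_characterization}, we must prove that $C(G)$ contains no $n$-divisible element.

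First, I would recast the problem dynamically. Since $G$ is countable, every circular ordering $f$ of $G$ gives rise to a faithful embedding $\rho_f \colon G \hookrightarrow \Homeo_+(S^1)$, and $[f]$ is the pullback $\rho_f^*\mathrm{eu}$ of the integer Euler class $\mathrm{eu} \in H^2(\Homeo_+(S^1);\ZZ)$. Hence $C(G)$ is contained in the set of Euler classes of faithful $G$-actions on the circle, and divisibility of $[f]$ in $H^2(G;\ZZ)$ can be studied through the Euler classes of such actions.

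Next, I would apply the cohomological rigidity of $\Mod(S_{g,1})$-actions on $S^1$. For $g \geq 2$, rigidity results in the vein of Mann--Wolff, together with the standard description of the second bounded cohomology $H^2_b(G;\RR)$ and the behavior of the comparison map to $H^2(G;\RR)$, force the Euler class of any action of $G$ on $S^1$ to be $0$ or $\pm \epsilon$, where $\epsilon \in H^2(G;\ZZ)$ denotes the Euler class of the canonical $G$-action on the Gromov boundary $\partial \pi_1(S_{g,1}) \cong S^1$. The class $[f]=0$ is excluded because a circular ordering with zero class lifts to a left-ordering, but $G$ is not left-orderable: it contains torsion (for instance, the hyperelliptic involution of $S_g$ fixed at a Weierstrass point gives an order-$2$ element). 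Therefore $C(G) \subseteq \{\pm\epsilon\}$.

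Finally, $\epsilon$ is known to be primitive in $H^2(G;\ZZ)$ (indeed a generator, as a signature/Meyer-type class), so neither $\epsilon$ nor $-\epsilon$ is $n$-divisible for any $n \geq 2$; Theorem \ref{thm:main_tool} then gives the conclusion. The main obstacle is Step 2: extracting from the available rigidity theorems a statement that pins down the \emph{integer} Euler class of every faithful circle action of $\Mod(S_{g,1})$, rather than merely the bounded Euler class up to semi-conjugacy, and matching it to a primitive element of $H^2(G;\ZZ)$. Once the rigidity input is set up carefully, the divisibility check and the appeal to Theorem \ref{thm:main_tool} are immediate.
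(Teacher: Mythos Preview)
Your proposal is correct and follows essentially the same route as the paper: invoke Mann--Wolff rigidity to pin down the Euler class of any circular ordering of $\Mod(S_{g,1})$, use that this class is primitive in $H^2(\Mod(S_{g,1});\ZZ)\cong\ZZ^2$ (the paper cites \cite[Sections~5.5.3--5.5.5]{FM12} for this, not a Meyer-type argument), and conclude via Theorem~\ref{thm:main_tool}. The paper is slightly more direct in that Mann--Wolff already gives $[f]=[f_s]$ in $H^2_b(\Mod(S_{g,1});\ZZ)$ for every circular ordering $f$, so there is no need to allow for $0$ or $-\epsilon$ and then separately exclude $0$ via torsion; the passage to ordinary cohomology is handled by the comparison map $H^2_b\to H^2$ with integer coefficients throughout, which avoids the detour through $\RR$-coefficients you flag as the main obstacle.
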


Another consequence has potential applications to the $L$-space conjecture \cite{BGW, Juhasz}. The conjecture states that for a closed, connected, orientable prime 3-manifold $M$, the following are equivalent:
\begin{itemize}
\item $M$ admits a co-oriented, taut foliation.
\item $M$ is not a Heegaard-Floer $L$-space.
\item $\pi_1(M)$ is left-orderable.
\end{itemize}
At the time of writing, the conjecture remains open for hyperbolic rational homology 3-spheres, among other classes of 3-manifolds. With this backdrop, we prove the following theorem, which is Proposition \ref{prop:L-space}.

\begin{thm}
Suppose $M$ is a connected, orientable, irreducible, rational homology 3-sphere. Suppose that $n$ is the exponent of $H_1(M;\ZZ)$. Then $\pi_1(M)$ is left-orderable if and only if $\pi_1(M) \times \ZZ/n\ZZ$ is circularly-orderable.
\end{thm}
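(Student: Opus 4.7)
My plan is to prove the forward direction immediately from Theorem \ref{thm:motivation}, and devote the real work to the converse via Theorem \ref{thm:main_tool}. That theorem turns the hypothesis ``$\pi_1(M) \times \ZZ/n\ZZ$ is circularly-orderable'' into the existence of a circular order $f$ on $\pi_1(M)$ whose class $[f] \in H^2(\pi_1(M);\ZZ)$ is $n$-divisible. The guiding observation is that, for $M$ as in the hypothesis, $H^2(\pi_1(M);\ZZ)$ itself has exponent dividing $n$, so $n$-divisibility forces $[f] = 0$, which in turn forces left-orderability.

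I would first dispose of the case where $\pi_1(M)$ is finite. Then $\pi_1(M) \times \ZZ/n\ZZ$ is a finite circularly-orderable group, hence cyclic by Proposition \ref{prop:finite-CO-is-cyclic}; this forces $\pi_1(M) \cong \ZZ/k\ZZ$ with $n = k$, so that $\pi_1(M) \times \ZZ/n\ZZ \cong \ZZ/k\ZZ \times \ZZ/k\ZZ$, which is non-cyclic unless $k = 1$. Hence $\pi_1(M)$ is trivial and vacuously left-orderable.

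Now assume $\pi_1(M)$ is infinite. I would identify $H^2(\pi_1(M);\ZZ)$ with $H_1(M;\ZZ)$ in two steps. First, the sphere theorem gives $\pi_2(M) = 0$; together with $\pi_1(M)$ infinite, the universal cover $\widetilde{M}$ is a simply-connected non-compact $3$-manifold with trivial positive-degree homology, hence contractible, so $M$ is a $K(\pi_1(M),1)$ and $H^2(\pi_1(M);\ZZ) \cong H^2(M;\ZZ)$. Second, Poincar\'e duality combined with the Universal Coefficient Theorem (noting $H_2(M;\ZZ) = 0$ because $H_1(M;\ZZ)$ is finite) gives
\[
H^2(M;\ZZ) \cong \Ext(H_1(M;\ZZ),\ZZ) \cong H_1(M;\ZZ),
\]
a group of exponent $n$ by hypothesis. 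Therefore $[f] = n\mu = 0$ for any witness $\mu$.

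Finally I would conclude via the standard correspondence between circular orders and central extensions: vanishing of $[f]$ says that the central extension $0 \to \ZZ \to \widetilde{\pi_1(M)} \to \pi_1(M) \to 0$ built from $f$ splits, so $\pi_1(M)$ sits as a direct summand inside $\widetilde{\pi_1(M)}$; since the latter is left-orderable (as a subgroup of the lifted circle-homeomorphism group acting on $\RR$), so is $\pi_1(M)$. The main subtle step will be the $3$-manifold-topological asphericity argument and the cohomological identification $H^2(\pi_1(M);\ZZ) \cong H_1(M;\ZZ)$; once these are in place, the remainder is a direct assembly around Theorem \ref{thm:main_tool} together with classical facts about Euler classes of circular orderings.
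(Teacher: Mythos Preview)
Your proposal is correct and follows essentially the same route as the paper: dispose of the finite-$\pi_1$ case via Proposition~\ref{prop:finite-CO-is-cyclic}, then in the infinite case identify $H^2(\pi_1(M);\ZZ)\cong H_1(M;\ZZ)$ via asphericity and Poincar\'e duality, so that $n$-divisibility of $[f]$ forces $[f]=0$ and hence left-orderability. The only cosmetic difference is that the paper packages the final implication through Proposition~\ref{prop:obs_facts}(2) rather than explicitly splitting the central extension as you do, but the underlying argument is identical.
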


\subsection{Direct products of circularly-orderable groups}
We finish the paper by addressing direct products of more general circularly-orderable groups.   With mild restrictions on their obstruction spectra, we are able to give a solution to Problem \ref{prob:direct_products} when one of the factors admits a circular ordering that is invariant under left and right multiplication, see Theorem \ref{thm:biCO-product}.

While circular orderability of direct products in general remains mysterious, circular orderability of iterated direct products of an amenable group turns out to behave much like circular orderability of iterated direct products of an abelian group.  That is, when an abelian group $A$ is circularly-orderable but not left-orderable, we know that $A \times A$ is not circularly-orderable since it must contain a subgroup isomorphic to $\ZZ/ n\ZZ \times \ZZ/ n\ZZ$ for some $n$.  For an amenable group $G$, it turns out that $G \times G$ may very well be circularly-orderable, but by taking successive direct products of $G$ with itself one will eventually arrive at a product $G^m = G \times \dots \times G$ that is not circularly-orderable (see Proposition \ref{eventuallynonco}, which also provides a bound for such $m$).  In fact, the Promislow group provides an example of an amenable group $G$ for which $G \times G$ is circularly-orderable, while we know from Proposition \ref{eventuallynonco} that there exists $m \geq 3$ such that $G^m$ is not.

\subsection{Outline of the paper}
Section \ref{prelims} contains the relevant background and preliminary lemmas regarding central extensions, left-orderable groups, and circularly-orderable groups. Theorem \ref{thm:main_tool} is proved in Section \ref{sec:main}, and the applications to mapping class groups and 3-manifold groups are explored in Section \ref{sec:rigidity}. The paper concludes in Section \ref{sec:direct-products} by addressing direct products of circularly-orderable groups.

\subsection{Acknowledgements}
We would like to thank Ying Hu and Justin Lanier for helpful conversations.

\section{Preliminaries}
\label{prelims}
We first review the necessary background on circular orderings and group cohomology in order to establish notation.  All group cohomology in this paper is considered with the trivial action.

\subsection{2-cocycles and central extensions} \label{sec:central-extensions}
 Let $G$ be a group and $A$ an abelian group. We begin by reviewing the constructions giving the well-known one-to-one correspondence between elements of $H^2(G;A)$ and equivalence classes of central extensions of $G$ by $A$ (see, for example, \cite[Chapter IV.3]{Brown}).

An {\it inhomogeneous 2-cocycle} $f:G^2 \to A$ is a function such that $f(id,g) = f(g,id) = 0$ for all $g \in G$ and $f(h,k) - f(gh,k) + f(g,hk) - f(g,h) = 0$ for all $g,h,k \in G$. From an inhomogeneous 2-cocycle we can construct the {\it associated central extension} $\wt G$, by equipping the set $A \times G$ with the group operation $(a,g)(b,h) = (a + b + f(g,h),gh)$. Then $\wt G$ fits into the short exact sequence
\[
1 \lra A \overset{\iota}\lra \wt G \overset{\rho}{\lra} G \lra 1
\]
where $\iota(a) = (a,id)$ and $\rho(a,g) = g$. Furthermore, $\iota(A)$ is central in $\wt G$, so $\wt G$ is a central extension of $G$ by $A$.

Conversely, suppose we have a central extension
\[
1 \lra A \overset{\iota}\lra H \overset{\rho}\lra G \lra 1.
\]
Let $\nu:G \to H$ be a {\it normalized section}, that is, a function such that $\rho \nu(g) = g$ for all $g \in G$ and $\nu(id) = id$. We can define the {\it associated cocycle} $f_\nu:G^2 \to A$ by $\iota f_\nu(g,h) = \nu(g)\nu(h)\nu(gh)^{-1}$, which is an inhomogeneous 2-cocycle.

Two central extensions
\[
1 \lra A \overset{\iota_1}\lra H_1 \overset{\rho_1}\lra G \lra 1 \eand 1 \lra A \overset{\iota_2}\lra H_2 \overset{\rho_2}\lra G \lra 1
\]
are {\it equivalent} if there exists a homomorphism $\phi:H_1 \to H_2$ such that $\phi\iota_1 = \iota_2$ and $\rho_2\phi = \rho_1$. Such a homomorphism is necessarily an isomorphism. Denote the set of equivalence classes of central extensions of $G$ by $A$ by $\vE(G,A)$. 

Given a representative extension of an element of $\vE(G,A)$, we can choose a normalized section and build the corresponding cocycle, which gives an element of $H^2(G;A)$. Conversely, given an element of $H^2(G;A)$, we can choose a representative inhomogeneous cocycle and build the corresponding central extension of $G$ by $A$, thus defining an element of $\vE(G,A)$. These constructions give a one-to-one correspondence between
$H^2(G;A)$ and  $\vE(G,A)$, taking $0$ in $H^2(G;Z)$ to the equivalence class of the extension 
\[ 1 \lra A \lra G \times A \lra G \lra 1
\]
in $\vE(G,A)$.

\subsection{Left and circular orderings}\label{sec:lo-co-basics}

 A group is {\it left-orderable} if it admits a strict total ordering $<$ invariant under left multiplication, as defined in the introduction.  When $G$ comes equipped with a left ordering $<$, the pair $(G,<)$ will be called a {\it left-ordered group}.  Equivalent to the existence of a left ordering on a group is the existence of a subset $P \subset G$ such that $G = P \sqcup P^{-1}\sqcup \{id\}$ and $P \cdot P \subset P$ called a {\it positive cone}. Given a left ordering $<$, the positive cone is given by $P = \{g \in G \mid g > id\}$. Conversely, given a positive cone $P$, the corresponding left ordering is given by $g < h$ if and only if $g^{-1}h \in P$. Elements in $P$ will be called {\it positive} elements.

We now give two definitions of a circular ordering on a group.   They turn out to be equivalent, and this equivalence is the content of Proposition \ref{prop:f-c-equivalence} below. The standard definition in the literature is Definition \ref{def:CO-hom}, but unless otherwise stated, in this paper we will use Definition \ref{def:CO-inhom} for ease of exposition.

\begin{defn}[Circular orderings as homogeneous cocycles] \label{def:CO-hom}
A {\it circular ordering} on a group $G$ is a function $c:G^3 \rightarrow \{ 0, \pm 1\}$ satisfying:
\begin{enumerate}
\item $c^{-1}(0) = \{ (g_1, g_2, g_3) \mid g_i = g_j \mbox{ for some $i \neq j$ } \}$.
\item $c$ satisfies a cocycle condition, meaning
\[c(g_2, g_3, g_4) - c(g_1, g_3, g_4) + c(g_1, g_2, g_4) - c(g_1, g_2, g_3) = 0
\]
for all $g_1, g_2, g_3, g_4 \in G$.
\item $c(g_1, g_2, g_3) = c(hg_1, hg_2, hg_3)$ for all $h, g_1, g_2, g_3 \in G$.
\end{enumerate}
\end{defn}
\begin{defn}[Circular orderings as inhomogeneous cocycles]\label{def:CO-inhom}
A {\it circular ordering} on a group $G$ is an inhomogeneous 2-cocycle $f:G^2 \to \ZZ$ satisfying:
\begin{enumerate}
\item $f(g,h) \in \{0,1\}$ for all $g,h \in G$.
\item $f(g,g^{-1}) = 1$ for all $g \in G \sm\{id\}$.
\end{enumerate}
\end{defn}

Given a group $G$, let $\vC\vH(G)$ be the set of circular orderings as in Definition \ref{def:CO-hom}, and let $\vC\vI(G)$ be the set of circular orderings as in Definition \ref{def:CO-inhom}.

Let $c \in \vC\vH(G)$. Define $f^{(c)}:G^2 \to \ZZ$ by 
\[
f^{(c)}(g,h) = \begin{cases}
0 &\text{if } g=id \text{ or }h = id,\\
1 &\text{if } gh = id \text{ and } g \neq id, \\
\frac 12(1 - c(id,g,gh)) &\text{otherwise.}
\end{cases}
\]

Let $f \in \vC\vI(G)$. Define $c^{(f)}:G^3 \to \ZZ$ by
\[
c^{(f)}(g_1,g_2,g_3) = \begin{cases}
0 &\text{if } g_i = g_j \text{ for some } i \neq j,\\
1 - 2f(g_1^{-1}g_2,g_2^{-1}g_3) &\text{otherwise.}
\end{cases}
\]

Given a circular ordering $c$ of a group $G$, the definition of $f^{(c)}$ was first introduced by \v{Z}eleva in \cite[Proof of Theorem 3]{Zeleva}.

\begin{prop}\label{prop:f-c-equivalence}
The maps $c \mapsto f^{(c)}$ and $f \mapsto c^{(f)}$ give a one-to-one correspondence $\vC\vH(G) \leftrightarrow \vC\vI(G)$.
\end{prop}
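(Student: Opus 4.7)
The plan is to verify that $f^{(c)}$ and $c^{(f)}$ land in the correct sets, then check that the two assignments are mutually inverse. Well-definedness is mostly immediate: for $c \in \vC\vH(G)$, axiom (1) of Definition~\ref{def:CO-hom} says $c$ takes values in $\{-1,1\}$ on triples of distinct elements, so $\tfrac12(1-c(\id,g,gh)) \in \{0,1\}$, and the other piecewise clauses manifestly give $f^{(c)} \in \{0,1\}$ with $f^{(c)}(\id,g)=f^{(c)}(g,\id)=0$ and $f^{(c)}(g,g^{-1})=1$. For $f \in \vC\vI(G)$, the element $c^{(f)}(g_1,g_2,g_3)=1-2f(g_1^{-1}g_2,g_2^{-1}g_3)$ takes values in $\{-1,1\}$ on distinct triples and $0$ on triples with a repeat, and left-invariance is automatic from $(hg_i)^{-1}(hg_j)=g_i^{-1}g_j$.

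The bulk of the work is the cocycle conditions. Setting $\tilde f(g,h) := c(\id,g,gh)$, left-invariance of $c$ gives $\tilde f(h,k) = c(\id,h,hk) = c(g,gh,ghk)$, and feeding $(\id,g,gh,ghk)$ into the homogeneous cocycle condition yields $\tilde f(h,k) - \tilde f(gh,k) + \tilde f(g,hk) - \tilde f(g,h) = 0$. Since $f^{(c)} = \tfrac12(1-\tilde f)$ on the generic stratum, this gives the inhomogeneous cocycle condition there; the remaining cases (where some of $g,h,k,gh,hk,ghk$ equal $\id$) are routine checks using the piecewise clauses. In the reverse direction, for a quadruple $(g_1,g_2,g_3,g_4)$ of distinct elements, set $a=g_1^{-1}g_2$, $b=g_2^{-1}g_3$, $c=g_3^{-1}g_4$; then the homogeneous cocycle identity for $c^{(f)}$ simplifies to $f(b,c)-f(ab,c)+f(a,bc)-f(a,b)=0$, which is exactly the cocycle condition for $f$.

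The main obstacle is handling the homogeneous cocycle condition for $c^{(f)}$ when two of the $g_i$ coincide. Each such degenerate case reduces, after cancellation of the zero terms, to an identity of the form
\[
f(a^{-1},ab) + f(a,b) = 1 \quad\text{or}\quad f(bc,c^{-1}) + f(b,c) = 1
\]
for appropriate $a,b,c \in G \sm \{\id\}$. Both identities follow from the inhomogeneous cocycle condition applied to the triple $(a^{-1},a,b)$ (respectively $(b,c,c^{-1})$), combined with the normalization $f(\id,\cdot)=f(\cdot,\id)=0$ and the axiom $f(x,x^{-1})=1$. A short case-by-case analysis (based on which pair $g_i=g_j$ coincides) closes out the verification.

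Finally, to see the maps are inverse, I would compute directly. For $f \mapsto c^{(f)} \mapsto f^{(c^{(f)})}$ and a pair $(g,h)$ with $g,h,gh$ all nonidentity, the elements $\id,g,gh$ are distinct, so $c^{(f)}(\id,g,gh) = 1 - 2f(g,h)$ and hence $f^{(c^{(f)})}(g,h) = \tfrac12(1-(1-2f(g,h))) = f(g,h)$; the degenerate cases match by the piecewise clauses. For $c \mapsto f^{(c)} \mapsto c^{(f^{(c)})}$ and distinct $g_1,g_2,g_3$, the elements $g_1^{-1}g_2$, $g_2^{-1}g_3$, $g_1^{-1}g_3$ are all non-identity, so
\[
c^{(f^{(c)})}(g_1,g_2,g_3) = 1 - 2 f^{(c)}(g_1^{-1}g_2,g_2^{-1}g_3) = c(\id,g_1^{-1}g_2,g_1^{-1}g_3) = c(g_1,g_2,g_3)
\]
by left-invariance, and both sides vanish when two $g_i$'s agree. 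This completes the bijection.
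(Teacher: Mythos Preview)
Your proof is correct and follows the same direct-verification strategy as the paper: check that each map lands in the right set (the main content being the two cocycle conditions, handled on the generic stratum and then on degenerate strata), and then check the two compositions are the identity. One small refinement: the degenerate cocycle check for $c^{(f)}$ actually has three nontrivial coincidence patterns ($g_1=g_3$, $g_2=g_4$, and $g_1=g_4$), and the last of these reduces to the identity $f(ab^{-1},b)=f(b,a^{-1})$ rather than to either of the two forms you listed---though it too follows from a single instantiation of the inhomogeneous cocycle relation together with $f(x,x^{-1})=1$.
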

\begin{proof}
Let $c \in \vC\vH(G)$. It is clear that $f^{(c)}(g,id) = f^{(c)}(id,g) = 0$ and $f^{(c)}(g,h) \in \{0,1\}$ for all $g,h \in G$. By the definition of $f^{(c)}$, $f^{(c)}(g,g^{-1}) = 1$ for all $g \in G \sm\{id\}$. Since $f^{(c)}$ is an inhomogeneous 2-cocycle \cite[Theorem 2]{Zeleva}, $c \mapsto f^{(c)}$ is a well-defined map $\vC\vH(G) \to \vC\vI(G)$.

Conversely, let $f \in \vC\vI(G)$. By the definition, $c^{(f)}$ takes values in $\{0,\pm1\}$, and $c(g_1,g_2,g_3) = 0$ if and only if $g_i = g_j$ for some $i \neq j$. Therefore it suffices to show $c^{(f)}$ satisfies conditions (2) and (3) from Definition \ref{def:CO-hom}. Let $g_1,g_2,g_3 \in G$. If $g_i = g_j$ for some $i \neq j$, then $hg_i = hg_j$ for all $h \in G$, so $c^{(f)}(hg_1,hg_2,hg_3) = c^{(f)}(g_1,g_2,g_3) = 0$. So assume $g_1,g_2,g_3$ are distinct, and let $h \in G$. Then
\[
c^{(f)}(hg_1,hg_2,hg_3) = 1 - 2f(g_1^{-1}h^{-1}hg_2,g_2^{-1}h^{-1}hg_3) = 1 - 2f(g_1g_2^{-1},g_2g_3^{-1}) = c^{(f)}(g_1,g_2,g_3).
\]
To see the cocycle condition is satisfied, let $g_1,g_2,g_3,g_4 \in G$ and let $$\vS  = c^{(f)}(g_2,g_3,g_4) - c^{(f)}(g_1,g_3,g_4) + c^{(f)}(g_1,g_2,g_4) - c^{(f)}(g_1,g_2,g_3).$$ If all the $g_i$ are distinct, then
\[
\vS = 2(-f(g_2^{-1}g_3,g_3^{-1}g_4) + f(g_1^{-1}g_3,g_3^{-1}g_4) - f(g_1^{-1}g_2,g_2^{-1}g_4) + f(g_1^{-1}g_2,g_2^{-1}g_3)) = 0.
\]
Assume the $g_i$ are not all distinct. We deal separately with the following four cases.\\
{\bf Case 1.} ({\it $g_1 = g_2$, $g_2 = g_3$, or $g_3 = g_4$}): Then it is clear that $\vS = 0$.\\
{\bf Case 2.} ({\it $g_1 = g_3$}): First note that for any $a,b \in G$, the cocycle condition for $f$ gives $f(a^{-1},b) - f(id,b) + f(a,a^{-1}b) - f(a,a^{-1}) = 0$ so $1 - f(a,a^{-1}b) - f(a^{-1},b) = 0$. Now, we have
\[
\vS = c^{(f)}(g_2,g_1,g_4) + c^{(f)}(g_1,g_2,g_4) = 2 - 2f(g_2^{-1}g_1,(g_2^{-1}g_1)^{-1}(g_2^{-1}g_4)) - 2f((g_2^{-1}g_1)^{-1},g_2^{-1}g_4) = 0.
\]
{\bf Case 3.} ({\it $g_2 = g_4$}): As above, note that for any $a,b \in G$ we have $f(b,b^{-1}) - f(ab,b^{-1}) + f(a,id) - f(a,b) = 0$ so $f(ab,b^{-1}) + f(a,b) = 1$. Then
\[
\vS = -c^{(f)}(g_1,g_3,g_2) - c^{(f)}(g_1,g_2,g_3) = -2 + 2(f((g_1^{-1}g_2)(g_2^{-1}g_3),(g_2^{-1}g_3)^{-1}) + f(g_1^{-1}g_2,g_2^{-1}g_3)) = 0.
\]
{\bf Case 4.} ({\it $g_1 = g_4$}): For $a,b \in G$ we have $f(b,a^{-1}) - f(a,a^{-1}) + f(ab^{-1},ba^{-1}) - f(ab^{-1}, b) = 0$ so $f(b,a^{-1}) - f(ab^{-1},b) = 0$. Then
\[
\vS = c^{(f)}(g_2,g_3,g_1) - c^{(f)}(g_1,g_2,g_3) = 2(f((g_1^{-1}g_3)(g_2^{-1}g_3)^{-1},g_2^{-1}g_3) - f(g_2^{-1}g_3,(g_1^{-1}g_3)^{-1})) = 0,
\]
so we can conclude $c^{(f)} \in \vC\vH(G)$.

It now suffices to show $f^{(c^{(f)})} = f$ and $c^{(f^{(c)})} = c$ for all $f \in \vC\vI(G)$ and all $c \in \vC\vH(G)$.

If $g = id$, $h = id$, or $gh = id$, we have $f^{(c^{(f)})}(g,h) = f(g,h)$. Assume $g \neq id$, $h \neq id$, and $gh \neq id$. Then $id,g$, and $gh$ are all distinct and 
\[
f^{(c^{(f)})}(g,h) = \frac 12(1 - c^{(f)}(id,g,gh)) = f(g,h).
\]
Similarly, let $g_1,g_2,g_3 \in G$. If they are not all distinct, then $c^{(f^{(c)})}(g_1,g_2,g_3) = c(g_1,g_2,g_3) = 0$, so assume they are all distinct. Then $g_1^{-1}g_2 \neq id$, $g_2^{-1}g_3 \neq id$, and $(g_1^{-1}g_2)(g_2^{-1}g_3) \neq id$. Then
\[
c^{(f^{(c)})}(g_1,g_2,g_3) = 1 - 2f^{(c)}(g_1^{-1}g_2,g_2^{-1}g_3) = c(g_1,g_2,g_3)
\]
completing the proof.
\end{proof}

\begin{rmk}
Intuitively, a circularly-ordered group is a way of placing the elements of the group on a circle such that their order around the circle respects the group operation. A circular ordering $c \in \vC\vH(G)$ captures this by declaring that $c(g_1,g_2,g_3) = 1$ whenever one encounters the elements $(g_1,g_2,g_3)$ in that order when proceeding counterclockwise around the circle.   Similarly $c(g_1,g_2,g_3) = -1$ when they are encountered in the reverse order. Property (3) in Definition \ref{def:CO-hom} says the order of any triple around the circle is preserved under left-multiplication. For a circular ordering $f \in \vC\vI(G)$, intuitively $f(g,h) = 1$ if right multiplication by $h$ pulls $g$ around the circle in a counterclockwise direction and past the identity, and $f(g,h) = 0$ if it does not.

\begin{center}
\begin{tikzpicture}[scale=.8]
	\draw[thick] (-6,0) circle (1cm) node[yshift=-1.5cm]{\small $c(g_1,g_2,g_3) = 1$};
	\draw[thick] (-2,0) circle (1cm) node[yshift=-1.5cm]{\small $c(g_1,g_2,g_3) = -1$};
	\draw[thick] (2,0) circle (1cm) node[yshift=-1.5cm]{\small $f(g,h) = 1$};
	\draw[thick] (6,0) circle (1cm) node[yshift=-1.5cm]{\small $f(g,h) = 0$};
	
	\filldraw[fill=black,draw=black,xshift=-6cm] (1,0) circle (.1cm) node[right]{$g_1$};
	\filldraw[fill=black,draw=black,xshift=-6cm] (0,1) circle (.1cm) node[above]{$g_2$};
	\filldraw[fill=black,draw=black,xshift=-6cm] ({cos(135)},{sin(135)}) circle (.1cm) node[shift={({.25*cos(135)},{.25*sin(135)})}]{$g_3$};
	
	\filldraw[fill=black,draw=black,xshift=-2cm] ({cos(20)},{sin(20}) circle (.1cm) node[shift={({.3*cos(20)},{.3*sin(20)})}]{$g_1$};
	\filldraw[fill=black,draw=black,xshift=-2cm] ({cos(75)},{sin(75)}) circle (.1cm) node[shift={({.25*cos(75)},{.25*sin(75)})}]{$g_3$};
	\filldraw[fill=black,draw=black,xshift=-2cm] ({cos(290)},{sin(290)}) circle (.1cm) node[shift={({.25*cos(290)},{.25*sin(290)})}]{$g_2$};

	\filldraw[fill=black,draw=black,xshift=2cm] (0,-1) circle (.1cm) node[below]{$id$};
	\filldraw[fill=black,draw=black,xshift=2cm] ({cos(60)},{sin(60}) circle (.1cm) node[shift={({.25*cos(60)},{.25*sin(60)})}]{$g$};
	\filldraw[fill=black,draw=black,xshift=2cm] ({cos(-60)},{sin(-60)}) circle (.1cm) node[shift={({.25*cos(-60)},{.25*sin(-60)})}]{$gh$};
	
	\filldraw[fill=black,draw=black,xshift=6cm] (0,-1) circle (.1cm) node[below]{$id$};
	\filldraw[fill=black,draw=black,xshift=6cm] ({cos(60)},{sin(60)}) circle (.1cm) node[shift={({.25*cos(60)},{.25*sin(60)})}]{$g$};
	\filldraw[fill=black,draw=black,xshift=6cm] ({cos(-125)},{sin(-125)}) circle (.1cm) node[shift={({.3*cos(-125)},{.3*sin(-125)})}]{$gh$};
\end{tikzpicture}
\end{center}
Since the theory around central extensions relies on inhomogeneous 2-cocycles (see Section \ref{sec:central-extensions}), for the remainder of this paper a circular ordering on a group $G$ will be an element $f \in \vC\vI(G)$, unless explicitly stated otherwise. If a group $G$ admits a circular ordering we say $G$ is {\it circularly-orderable}. When $G$ comes equipped with a circular ordering $f$, the pair $(G,f)$ will be called a {\it circularly-ordered group}.
\end{rmk}

\subsection*{Lexicographic circular orderings}
As noted in the introduction, it is not true that circular-orderability is a property that is preserved under taking extensions. However, an extension of a circularly-ordered group by a left-ordered group can be circularly-ordered by a standard lexicographic construction, which we review here (see \cite[Lemma 2.2.12]{Calegari}).

It follows from the cocycle condition that for a homogeneous circular ordering $c$ on a group $G$, that if $\sigma \in S_3$ is a permutation, then $c(g_1,g_2,g_3) = (-1)^{\sgn(\sigma)}c(g_{\sigma(1)},g_{\sigma(2)},g_{\sigma(3)})$ for all triples $(g_1,g_2,g_3) \in G^3$. Therefore, specifying the value of $c$ on a triple specifies the value of $c$ for all permutations of that triple.

Suppose $1 \to K \to G \overset{\phi}\to H \to 1$ is a short exact sequence such that $(K,<)$ is a left-ordered group and $(H,c_H)$ is a circularly-ordered group with $c_H \in \vC\vH(H)$. Let $c_< \in \vC\vH(K)$ be defined by $c_<(k_1,k_2,k_3) = 1$, if $k_1 < k_2 < k_3$. Define the {\it lexicographic circular ordering} $c \in \vC\vH(G)$ by
\[
c(g_1,g_2,g_3) = \begin{cases}
c_H(\phi(g_1),\phi(g_2),\phi(g_3)) &\text{if $\phi(g_1),\phi(g_2)$, and $\phi(g_3)$ are all distinct},\\
c_<(g_2^{-1}g_1,id,g_1^{-1}g_2) &\text{if $\phi(g_1) = \phi(g_2) \neq \phi(g_3)$},\\
c_<(g_1^{-1}g_3,id,g_1^{-1}g_2) &\text{if $\phi(g_1) = \phi(g_2) = \phi(g_3)$}.
\end{cases}
\] 

\subsection{Left-ordered central extensions}

We review in this section two important classical constructions.

\begin{const}\cite{Zeleva}\label{const:LO-central-extension}
Let $(G,f)$ be a circularly-ordered group. Consider the central extension
\[
1 \lra \ZZ \lra \wt G_\infty \lra G \lra 1
\]
associated to $f$, and note that the subset $P \subset \wt G_\infty$ by $P = \{(a,g) \in \wt G_\infty\mid a \geq 0\}\sm\{id\}$ defines a positive cone. 
\end{const}

Let $(G,<)$ be a left-ordered group. Recall that a positive element $z \in G$ is {\it $<$-cofinal} if for every $g \in G$, there exists a $t \in \NN$ such that $z^{-t} < g < z^t$. If the left ordering is clear from context, we will simply refer to an element as cofinal.  In Construction \ref{const:LO-central-extension}, it is easily checked that $P$ is indeed a positive cone, and $(1,0) \in \wt G_\infty$ is a positive, cofinal, central element.

\begin{rmk}\label{rmk:id-implies-lo}
Suppose $(G,f)$ is a circularly-ordered group such that $[f] = 0 \in H^2(G;\ZZ)$. Then $\wt G_\infty \cong G \times \ZZ$. Therefore, the existence of a circular ordering on a group $G$ that represents the trivial cohomology class implies that $G$ is left-orderable.
\end{rmk}

Suppose now that $(G,<)$ is a left-ordered group and $z \in G$ is a positive, cofinal, central element. Then for every $g \in G$, the coset $g\langle z \rangle$ has unique coset representative $\ol g$ such that $id \leq g < z$. The representative $\ol g \in g\langle z \rangle$ is called the {\it minimal representative of $g\langle z \rangle$}.

\begin{const}\cite{Zeleva}\label{const:CO-quotient}
Let $(G,<)$ be a left-ordered group and suppose $z \in G$ is a positive, cofinal, central element. Consider the central extension
\[
1 \lra \ZZ \overset{\iota}\lra G \overset{\pi}\lra G/\langle z \rangle \lra 1
\]
where $\iota(1) = z$. Let $\eta:G/\langle z \rangle \to G$ be the normalized section given by $\eta(g\langle z \rangle) = \ol g$. Then the associated cocycle $f_\eta:(G/\langle z \rangle)^2 \to \ZZ$ is a circular ordering on $G/\langle z \rangle$.
\end{const}

\subsection{Finite cyclic subgroups}
Finite cyclic subgroups, and extensions of circularly-ordered groups by finite cyclic groups, play an important part in our main result. In this section, we collect some results concerning circular orderings and finite cyclic subgroups which may be of independent interest.

While it is a well-known fact that every finite circularly-orderable group is cyclic, we begin by giving a quick algebraic proof of this fact for completeness.

\begin{prop}\label{prop:finite-CO-is-cyclic}
Let $G$ be a finite circularly-orderable group. Then $G$ is cyclic.
\end{prop}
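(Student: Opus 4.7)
The plan is to apply Construction \ref{const:LO-central-extension} to a circular ordering $f$ on $G$, producing a left-ordered central extension $\wt G_\infty$ of $G$ by $\ZZ$ with positive cofinal central element $z=(1,id)$, and then to prove $\wt G_\infty\cong\ZZ$. Since $G\cong\wt G_\infty/\langle z\rangle$, a quotient of a cyclic group is cyclic, so this immediately finishes the proof.

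The reduction $\wt G_\infty\cong\ZZ$ will rest on the following elementary order-theoretic fact, which I would either cite or prove in two lines: if $(H,<)$ is a left-ordered group admitting a cofinal least positive element $e$, then $H=\langle e\rangle$. The argument is a division-algorithm step: given $h\in H$, cofinality of $e$ produces a maximal $k\in\ZZ$ with $e^k\le h$, so that $id\le e^{-k}h<e$, and minimality of $e$ forces $e^{-k}h=id$.

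To produce a least positive element in $\wt G_\infty$, I would analyze the positive cone $P=\{(a,g):a\ge 0\}\sm\{id\}$, which consists of the finite set $\{(0,g):g\in G\sm\{id\}\}$ together with all $(a,g)$ satisfying $a\ge 1$. Using $f(g,g^{-1})=1$ (and the easy consequence $f(g^{-1},g)=1$ coming from the cocycle condition), one computes $(0,g)^{-1}=(-1,g^{-1})$ for $g\ne id$; a short calculation of $(0,h)^{-1}(a,g)$ then shows that every element with $a\ge 1$ is strictly larger than every $(0,h)$ with $h\ne id$. Since $G$ is finite, the set $\{(0,g):g\in G\sm\{id\}\}$ admits a $<$-minimum $(0,g_0)$, and by the previous comparison this is a least element of all of $P$.

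Finally, I would verify that $(0,g_0)$ is cofinal. A direct induction using the multiplication formula in $\wt G_\infty$ gives $(0,g_0)^{|G|}=z^{s(g_0)}$ where $s(g_0)=\sum_{k=1}^{|G|-1}f(g_0,g_0^k)\in\ZZ$. Torsion-freeness of $\wt G_\infty$ (a consequence of left-orderability) forces $s(g_0)\ne 0$, and positivity of $(0,g_0)$ forces $s(g_0)>0$. Hence $(0,g_0)^{|G|}$ is a positive power of the cofinal element $z$, so $(0,g_0)$ is itself cofinal, and the order-theoretic lemma applies. The main technical point is the computation showing $(0,g_0)$ undercuts every $(a,g)$ with $a\ge 1$; once this bookkeeping is carried out the remaining steps close quickly.
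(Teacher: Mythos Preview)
Your argument is correct. Both your proof and the paper's reduce to showing that the left-ordered central extension $\wt G_\infty$ is infinite cyclic, and then conclude by passing to the quotient. The difference lies in how this key step is established. The paper observes that $\wt G_\infty$ is torsion-free (being left-orderable) and contains the finite-index cyclic subgroup $\langle z\rangle$, then invokes a result of Stallings that any torsion-free group with a finite-index infinite cyclic subgroup is itself infinite cyclic. You instead exploit the order structure directly: finiteness of $G$ guarantees a least positive element $(0,g_0)$ in $\wt G_\infty$, and a division-algorithm argument shows that a left-ordered group with a cofinal least positive element is cyclic. Your route is more elementary and entirely self-contained, at the cost of some explicit bookkeeping with the cocycle $f$; the paper's route is shorter but relies on an external structural theorem. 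Either way the logical skeleton is the same.
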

\begin{proof}
Fix a circular ordering on $G$ and consider the left-ordered central extension $\wt G_\infty$ from Construction \ref{const:LO-central-extension}. Then $\wt G_\infty$ is torsion-free and has a finite-index cyclic subgroup, and is therefore cyclic 
\cite[Theorem 3]{Stallings}. Since quotients of cyclic groups are cyclic, $G$ is cyclic.
\end{proof}

It follows from the proof of Proposition \ref{prop:finite-CO-is-cyclic} that if $(G,f)$ is a finite, cyclic, circularly-ordered group, the left-ordered central extension $\wt G_\infty$ is infinite cyclic. Therefore, there exists a unique positive generator of $\wt G_\infty$.

\begin{defn}[Minimal generator]\label{def:minimal-generator}
Let $(G,f)$ be a finite, cyclic, circularly-ordered group. Consider the left-ordered central extension
\[
1 \lra \ZZ \lra \wt G_\infty \overset{\rho}\lra G \lra 1.
\]
Let $\tilde z \in \wt G_\infty$ be the unique positive generator of $\wt G_\infty$. The {\it minimal generator of $(G,f)$} is the element $z = \rho(\tilde z) \in G$.
\end{defn}

Alternatively, the minimal generator of $(G,f)$ can be defined as the unique element $z \in G$ such that $f(z,g) = 0$ for all $g \in G \sm\{z^{-1}\}$, or equivalently, such that $c^{(f)}(id,z,g) \in \{0,1\}$ for all $g \in G$.

We finish this section with a result about normal finite cyclic subgroups of circularly-orderable groups that may be of independent interest.

\begin{prop}\label{prop:central-cyclic-subgroup}
Let $(G,f)$ be a circularly-ordered group, and let $K \triangleleft G$ be a normal finite cyclic subgroup. Consider the left-ordered central extension
\[
1 \lra \ZZ \overset{\iota}\lra \wt G_\infty \overset{\rho}\lra G \lra 1.
\]
Then $\wt K = \rho^{-1}(K)$ is central in $\wt G_\infty$ and $K$ is central in $G$.
\end{prop}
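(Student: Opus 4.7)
The plan is to show $\wt K$ is infinite cyclic, at which point centrality in $\wt G_\infty$ will drop out, and applying $\rho$ will give centrality of $K = \rho(\wt K)$ in $G = \rho(\wt G_\infty)$.

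Since $K \triangleleft G$, the preimage $\wt K = \rho^{-1}(K)$ is normal in $\wt G_\infty$. Restricting the central extension yields a short exact sequence
\[
1 \lra \ZZ \overset{\iota}\lra \wt K \overset{\rho|_{\wt K}}\lra K \lra 1,
\]
in which $\iota(\ZZ)$ is still central (inherited from centrality in $\wt G_\infty$). Because $K$ is cyclic, every element of $\wt K$ has the form $\iota(a)\tilde k^{i}$ for a fixed lift $\tilde k$ of a generator of $K$, and such elements commute pairwise by centrality of $\iota(\ZZ)$. Thus $\wt K$ is abelian.

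Next, I would pin down $\wt K \cong \ZZ$. As a subgroup of the left-orderable group $\wt G_\infty$ (whose positive cone is given by Construction \ref{const:LO-central-extension}), $\wt K$ is torsion-free. It is finitely generated (by $\iota(1)$ together with $\tilde k$) and contains $\iota(\ZZ)$ as a subgroup of finite index $|K|$, so tensoring the exact sequence with $\QQ$ (using $K \otimes \QQ = 0$) shows $\wt K$ has torsion-free rank $1$. A finitely generated, torsion-free, rank-$1$ abelian group is infinite cyclic. Writing $\tilde z_K$ for the positive generator of $\wt K$ with respect to the induced left ordering, we have $\iota(1) = \tilde z_K^{\,n}$ with $n = |K|$.

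Finally, consider the conjugation action of $\wt G_\infty$ on its normal subgroup $\wt K \cong \ZZ$; this factors through $\Aut(\ZZ) = \{\pm 1\}$. Since $\iota(\ZZ)$ is central in $\wt G_\infty$, conjugation fixes $\iota(1) = \tilde z_K^{\,n}$. But an element acting by inversion would send $\tilde z_K^{\,n}$ to $\tilde z_K^{\,-n}$, which differs from $\tilde z_K^{\,n}$ because $\wt K$ is torsion-free and $n \neq 0$. Therefore the conjugation action is trivial, so $\wt K$ is central in $\wt G_\infty$, and projecting by $\rho$ yields that $K$ is central in $G$.

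The main obstacle is the middle step, identifying $\wt K$ as infinite cyclic: this is where torsion-freeness (from left-orderability of $\wt G_\infty$), the abelian-ness (from being a central extension of a cyclic group), and the rank-$1$ computation (from finiteness of $K$) must all be combined. Once $\wt K \cong \ZZ$ is in hand, the rigidity of $\Aut(\ZZ)$ together with centrality of $\iota(\ZZ)$ forces the conclusion with essentially no further computation.
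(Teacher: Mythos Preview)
Your argument is correct and follows essentially the same route as the paper's proof: both establish that $\wt K$ is infinite cyclic (the paper asserts this more tersely, while you spell out abelianness, torsion-freeness, and rank explicitly), and then both exploit that a power of any element of $\wt K$ lies in the central subgroup $\iota(\ZZ)$ to force conjugation to act trivially. Your phrasing of the last step via the conjugation map $\wt G_\infty \to \Aut(\ZZ) = \{\pm 1\}$ is equivalent to the paper's element-by-element argument that $(gkg^{-1})^t = k^t$ in $\ZZ$ forces $gkg^{-1} = k$.
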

\begin{proof}
Let $k \in \wt K$ and $g \in \wt G_\infty$. Since $K$ is a normal subgroup of $G$, $\wt K$ is a normal subgroup of $\wt G_\infty$. Since $\wt K$ is an infinite cyclic group, there exists $t \in \NN$ such that $k^t \in \iota(\ZZ)$. Therefore, $gk^tg^{-1} = k^t$; and since $gkg^{-1} \in \wt K$, we must have $gkg^{-1} = k$. We conclude that $\wt K$ is central in $\wt G_\infty$ and $K$ is central in $G$.
\end{proof}

\section{Circular orderability of finite cyclic central extensions}\label{sec:main}
The goal of this section is to prove Theorem \ref{cor:main}. We first set some notation. For $n \geq 2$, let $\mathfrak{p}_n:\ZZ \to \ZZ/n\ZZ$ be the quotient map. For $a \in \ZZ$, we will denote the coset $a + n\ZZ$ by $[a] \in \ZZ/n\ZZ$. Suppose $(G,f)$ is a circularly-ordered group. Then for each $n \geq 2$, $\mathfrak p_nf:G^2 \to \ZZ/n\ZZ$ is an inhomogeneous 2-cocycle. We will denote the associated central extension by $\wt G_n$.


\begin{prop}\label{prop:circularly-orderable-finite-extension}
Let $(G,f)$ be a circularly-ordered group. Let $n \geq 2$, and consider the inhomogeneous 2-cocycle $\mathfrak p_nf :G^2 \to \ZZ/n\ZZ$. Then the associated central extension $\wt G_n$ is circularly-orderable.
\end{prop}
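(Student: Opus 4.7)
The plan is to build $\wt G_n$ as a quotient of $\wt G_\infty$ by a positive, cofinal, central cyclic subgroup, so that Construction \ref{const:CO-quotient} directly produces a circular ordering on it.

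\textbf{Step 1: the ambient left-ordered extension.} Starting from $(G,f)$, apply Construction \ref{const:LO-central-extension} to obtain the left-ordered central extension
\[
1 \lra \ZZ \overset{\iota}\lra \wt G_\infty \overset{\rho}\lra G \lra 1,
\]
with positive cone $P = \{(a,g) \in \wt G_\infty \mid a \geq 0\} \sm \{id\}$. Recall from the discussion after that construction that the element $\iota(1) = (1,id)$ is positive, central, and cofinal in $(\wt G_\infty,<_P)$.

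\textbf{Step 2: a cofinal cyclic subgroup matching $n$-torsion.} Let $z := \iota(n) = (n,id)$. Since $\iota(\ZZ)$ is central in $\wt G_\infty$ and $n \geq 1$, the element $z$ is central and positive. Cofinality of $z$ follows from cofinality of $\iota(1)$: given any $g \in \wt G_\infty$, pick $t \in \NN$ with $\iota(1)^{-t} < g < \iota(1)^t$; since $n \geq 2$ forces $z^t = (nt,id) > (t,id)$ in the positive cone ordering (because $(t,id)^{-1}(nt,id) = (t(n-1),id) \in P$), we deduce $z^{-t} < g < z^t$.

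\textbf{Step 3: identify the quotient with $\wt G_n$.} By Construction \ref{const:CO-quotient}, applied to the left-ordered group $\wt G_\infty$ and the positive, central, cofinal element $z$, the quotient $\wt G_\infty/\langle z\rangle$ is circularly-orderable. It remains to identify this quotient with $\wt G_n$. Since $\langle z \rangle = \iota(n\ZZ) = \iota(\ker \mathfrak p_n)$, the quotient $\wt G_\infty/\langle z\rangle$ sits in a central extension
\[
1 \lra \ZZ/n\ZZ \lra \wt G_\infty/\langle z \rangle \lra G \lra 1,
\]
and choosing the image of the normalized section $g \mapsto (0,g)$ of $\wt G_\infty \to G$ as a normalized section for this new extension shows that its associated inhomogeneous cocycle is exactly $\mathfrak p_n f$. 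Hence $\wt G_\infty/\langle z\rangle$ is equivalent to $\wt G_n$ as a central extension, and in particular isomorphic as a group.

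\textbf{The main obstacle.} The only nontrivial point is that one must be careful to verify that $z$ really is cofinal (not merely positive and central), because Construction \ref{const:CO-quotient} needs all three conditions. Once this is established, the rest is a matter of unpacking the definitions to see that quotienting $\wt G_\infty$ by $\iota(n\ZZ)$ gives the central extension classified by $\mathfrak p_n[f]$, which is the extension $\wt G_n$ by construction.
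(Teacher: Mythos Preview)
Your proof is correct and follows essentially the same approach as the paper: both pass to the left-ordered central extension $\wt G_\infty$, observe that $(n,id)$ is positive, central and cofinal, and then invoke Construction \ref{const:CO-quotient} on the quotient. The only cosmetic difference is in the identification $\wt G_\infty/\langle (n,id)\rangle \cong \wt G_n$: the paper writes down the explicit surjection $(a,g)\mapsto (\mathfrak p_n(a),g)$ and computes its kernel, whereas you compute the cocycle of the quotient extension via the section $g\mapsto (0,g)\langle z\rangle$ and match it with $\mathfrak p_n f$---these are two ways of saying the same thing.
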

\begin{proof}
Let $\wt G_\infty$ be the left-ordered central extension of $G$ corresponding to $f$ from Construction \ref{const:LO-central-extension}, and let $z = (1,id) \in \wt G_\infty$ be the positive, cofinal, central element. Then $z^n = (n,id)$ is a positive, cofinal, central element, so by Construction \ref{const:CO-quotient}, $\wt G_\infty/\langle z^n \rangle$ is circularly-orderable. Define a map $\varphi:\wt G_\infty \to \wt G_n$ by $\varphi((a,g)) = (\mathfrak p_n(a),g)$. Then $\varphi$ is a surjective homomorphism, and it is easy to check that $(a,g) \in \ker(\varphi)$ if and only if $g = id$ and $a = nk$ for some $k \in \ZZ$. Therefore, $\ker(\varphi)= \langle z^n\rangle$ and $\wt G_\infty/\langle z^n \rangle \cong \wt G_n$, completing the proof.
\end{proof}

\begin{rmk}
We can explicitly write down the circular ordering on $\wt G_n$ arising from the proof of Proposition \ref{prop:circularly-orderable-finite-extension} as follows. Let $s:\ZZ/n\ZZ \to \ZZ$ be the section given by $s([a]) = a$, where $0 \leq a < n$. One can check  that $f_s:(\ZZ/n\ZZ)^2 \to \ZZ$ is a circular ordering on $\ZZ/n\ZZ$ (in fact $f_s$ is the circular ordering coming from the embedding $\ZZ/n\ZZ \hookrightarrow S^1$ where $[a] \mapsto e^{2\pi ia/n}$). Then the circular ordering $\hat f$ on $\wt G_n$ is given by
\[
\hat f ((a_1,g_1),(a_2,g_2)) = \begin{cases}
f_s(a_1,a_2) &\text{if } s(a_1) + s(a_2) \neq n-1 \\
f(g_1,g_2) &\text{otherwise.}
\end{cases}
\]
\end{rmk}

\begin{prop}\label{prop:finite-CO-quotient}
Let $(G,f)$ be a circularly-ordered group with central subgroup $K \cong \ZZ/n\ZZ$, and let $z \in K$ be the minimal generator. Consider the central extension
\[
1 \lra \ZZ/n\ZZ \overset{\iota}\lra G \overset{\pi}\lra G/K \lra 1
\]
where $\iota([1]) = z$ and $\pi$ is the quotient map. There exists a circular ordering $\bar f:(G/K)^2 \to \ZZ$ such that $\mathfrak p_n\bar f = f_\nu$ for some section $\nu:G/K \to G$ of the above central extension.
\end{prop}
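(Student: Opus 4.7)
The plan is to pass to the left-ordered central extension $\wt G_\infty$ of $(G,f)$ from Construction \ref{const:LO-central-extension}, use the preimage of $K$ to find a smaller positive cofinal central element, and then apply Construction \ref{const:CO-quotient}. Write $\iota : \ZZ \to \wt G_\infty$ and $\rho : \wt G_\infty \to G$ for the maps in $\wt G_\infty$, and set $\wt K := \rho^{-1}(K)$. By Proposition \ref{prop:central-cyclic-subgroup}, $\wt K$ is central in $\wt G_\infty$ and infinite cyclic. Since the induced left ordering makes $\wt K$ the left-ordered central extension of $(K,f|_{K\times K})$, the positive generator $\tilde z$ of $\wt K$ satisfies $\rho(\tilde z)=z$ by Definition \ref{def:minimal-generator}. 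Moreover $\iota(\ZZ) \subseteq \wt K$ is a subgroup of index $|K|=n$, and both $\iota(1)$ and $\tilde z^n$ are positive generators of $\iota(\ZZ)$, so $\tilde z^n = \iota(1)$.

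First I would check that $\tilde z$ is positive (by construction), central (by Proposition \ref{prop:central-cyclic-subgroup}), and cofinal (because $\tilde z^n = \iota(1)$ is cofinal, so for every $g \in \wt G_\infty$ some power $\tilde z^{nt}$ exceeds $g$). Applying Construction \ref{const:CO-quotient} to
\[
1 \lra \ZZ \overset{\iota'}\lra \wt G_\infty \lra \wt G_\infty/\langle \tilde z\rangle \lra 1, \qquad \iota'(1)=\tilde z,
\]
yields a circular ordering $\bar f : (\wt G_\infty / \langle \tilde z\rangle)^2 \to \ZZ$ from the normalized section $\eta$ that sends each coset to its minimal representative $\ol g$, characterized by $id \leq \ol g < \tilde z$.

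Next I would identify $\wt G_\infty / \langle \tilde z\rangle$ with $G/K$: the homomorphism $\pi \rho : \wt G_\infty \to G/K$ is surjective with kernel $\rho^{-1}(K) = \wt K = \langle \tilde z\rangle$. Under this identification, define the normalized section $\nu := \rho\circ\eta : G/K \to G$ of $\pi$; it is indeed a section since $\pi\rho\eta=\mathrm{id}_{G/K}$, and it is normalized since $\eta(id)=id$. It remains to verify that $\mathfrak p_n \bar f = f_\nu$. By construction of $\eta$,
\[
\eta(g_1)\eta(g_2) \;=\; \tilde z^{\,\bar f(g_1,g_2)}\,\eta(g_1g_2)
\]
in $\wt G_\infty$; applying $\rho$ and using $\rho(\tilde z)=z=\iota([1])$ gives
\[
\nu(g_1)\nu(g_2)\nu(g_1g_2)^{-1} \;=\; z^{\bar f(g_1,g_2)} \;=\; \iota\bigl(\mathfrak p_n(\bar f(g_1,g_2))\bigr),
\]
and since $\iota$ is injective, $f_\nu(g_1,g_2) = \mathfrak p_n(\bar f(g_1,g_2))$ as required.

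The heart of the argument, and what I expect to be the main thing to get right, is the identification of $\wt K$ as $\langle\tilde z\rangle$ with $\tilde z^n = \iota(1)$: this is what makes $\tilde z$ a legitimate replacement for $\iota(1)$ as a positive cofinal central element in $\wt G_\infty$, and simultaneously ensures that the resulting quotient is $G/K$ rather than some smaller quotient. Once this is set up, everything else is a routine chase of definitions through $\rho$.
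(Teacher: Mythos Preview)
Your argument is correct and follows essentially the same route as the paper: pass to the left-ordered central extension $\wt G_\infty$, take the positive generator $\tilde z$ of $\wt K=\rho^{-1}(K)$ as a positive cofinal central element, apply Construction \ref{const:CO-quotient}, and then push the resulting cocycle identity through $\rho$ to obtain $\mathfrak p_n\bar f=f_\nu$. The only cosmetic discrepancy is that you reuse the symbol $\iota$ for the inclusion $\ZZ\to\wt G_\infty$, which clashes with the $\iota:\ZZ/n\ZZ\to G$ already fixed in the statement; the paper avoids this by writing $\epsilon$ for the former and introduces an explicit isomorphism $\phi:G/K\to\wt G_\infty/\wt K$ rather than silently identifying the two quotients.
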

\begin{proof}
Consider the left-ordered central extension
\[
1 \lra \ZZ \overset{\epsilon}\lra \wt G_\infty \overset{\rho}\lra G \lra 1
\]
corresponding to $f$, and let $<$ be the left ordering on $\wt G_\infty$. Let $\wt K = \rho^{-1}(K)$, and let $\tilde z \in \rho^{-1}(z)$ be the positive generator of $\wt K$. By Proposition \ref{prop:central-cyclic-subgroup}, $\tilde z$ is central. Furthermore, since $(1,0) \in \wt K$, $\tilde z$ is cofinal and positive in $(\wt G_\infty,<)$. Consider the central extension
 \[
1 \lra \ZZ \overset{\hat\epsilon}\lra \wt G_\infty \overset{\hat\rho}\lra \wt G_\infty/\wt K \lra 1
\]
where $\hat\epsilon(1) = \tilde z$ and $\hat\rho$ is the quotient map. Let $\eta:\wt G_\infty/\wt K \to \wt G_\infty$ be the section given by $\eta((a,g)\wt K)) = \tilde g$, where $\tilde g \in \hat\rho^{-1}((a,g)\wt K)$ is the unique element such that $id \leq \tilde g < \tilde z$. Then by Construction \ref{const:CO-quotient}, the associated cocycle $f_\eta$ is a circular ordering on $\wt G_\infty/\wt K$.

Now let $\phi:G/K \to \wt G_\infty/\wt K$ be the isomorphism given by $\phi(gK) = (0,g)\wt K$. Define $\bar f(gK,hK) = f_\eta(\phi(gK),\phi(hK))$, which is a circular ordering on $G/K$ since $\phi$ is an isomorphism.

Define $\nu:G/K \to G$ by $\nu = \rho\eta\phi$. We first wish to show $\nu$ is a normalized section of the quotient map $\pi:G \to G/K$. To that end, note that $\eta((a,g)\wt K) = (0,g')$ for some $g' \in G$. Then
\[
\pi\nu(gK) = \pi\rho\eta\phi(gK) = \pi\rho\eta((0,g)\wt K) = \pi\rho((0,g)) = \pi(g) = gK
\] 
and $\nu(K) = \rho\eta\phi(K) = \rho\eta(\wt K) = \rho((0,id)) = id$, so $\nu$ is a normalized section of $\pi$.

It remains to show that $\mathfrak p_n \bar f = f_\nu$. First note that for $t \in \ZZ$, $\iota\mathfrak p_n(t) = \iota([t]) = z^t$ and $\rho\hat\epsilon(t) = \rho(\tilde z^t) = z^t$. Therefore, $\iota \mathfrak p_n = \rho \hat \epsilon: \ZZ \to G$. For $gK,hK \in G/K$ we have
\begin{align*}
\iota\mathfrak p_n\bar f(gK,hK) &= \iota\mathfrak p_nf_\eta(\phi(gk),\phi(hK)) \\
&= \rho\hat\epsilon f_\eta(\phi(gK),\phi(hK)) \\
&= \rho\left(\eta\phi(gK)\eta\phi(hK)(\eta\phi(ghK))^{-1}\right)\\
&= \nu(gK)\nu(hK)\nu(ghK)^{-1}\\
&=\iota f_\nu(gK,hK).
\end{align*}
Since $\iota$ is injective, $\mathfrak p_n\bar f = f_\nu$, completing the proof.
\end{proof}

While the next corollary does not appear elsewhere in this paper, it is worth noting as it is a somewhat surprising and novel consequence.
\begin{cor}
Suppose $G$ is a circularly-orderable group and $K$ is a finite cyclic normal subgroup of $G$. Then $G/K$ is circularly-orderable.
\end{cor}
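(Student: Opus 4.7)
The proof is essentially immediate once one observes that the hypotheses of the two preceding propositions are satisfied. The plan is as follows: since $G$ is circularly-orderable, fix any circular ordering $f$ on $G$, making $(G,f)$ a circularly-ordered group. Because $K$ is a finite cyclic normal subgroup of $(G,f)$, Proposition \ref{prop:central-cyclic-subgroup} applies and tells us that $K$ is central in $G$. Writing $n = |K|$, we have $K \cong \ZZ/n\ZZ$ as a central subgroup, so the hypotheses of Proposition \ref{prop:finite-CO-quotient} are met.

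Applying Proposition \ref{prop:finite-CO-quotient} to $(G,f)$ with the minimal generator $z$ of $K$ then produces the desired circular ordering $\bar f : (G/K)^2 \to \ZZ$ on the quotient $G/K$, and the corollary follows. There is no real obstacle; the content has already been established in the two preceding propositions, and this corollary simply records the consequence that normality plus finite cyclicity of $K$ is enough to promote circular orderability of $G$ to circular orderability of $G/K$ (a conclusion which, as the authors note, is perhaps surprising because circular orderability is not in general preserved under quotients).
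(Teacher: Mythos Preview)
Your proposal is correct and follows essentially the same approach as the paper: first invoke Proposition~\ref{prop:central-cyclic-subgroup} to deduce that $K$ is central, then apply Proposition~\ref{prop:finite-CO-quotient} to obtain a circular ordering on $G/K$. The paper's proof is identical in substance, just more terse.
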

\begin{proof}
By Proposition \ref{prop:central-cyclic-subgroup}, $K$ is central; so, by Proposition \ref{prop:finite-CO-quotient}, $G/K$ is circularly-orderable.
\end{proof}

For a group $G$ and an integer $n \geq 2$, let $\mathfrak P_n:H^2(G;\ZZ) \to H^2(G;\ZZ/n\ZZ)$ be the homomorphism given by $\fP_n([f]) = [\mathfrak p_n f]$. 

\begin{lem}\label{thm:main}
Let $G$ be a group. The group $G \times \ZZ/n\ZZ$ is circularly-orderable if and only if there exists a circular ordering $f$ of $G$ such that $[f] \in \ker(\fP_n)$.
\end{lem}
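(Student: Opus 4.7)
The plan is to prove the two directions separately using Propositions~\ref{prop:circularly-orderable-finite-extension} and~\ref{prop:finite-CO-quotient}, exploiting the fact that $[f] \in \ker(\fP_n)$ is equivalent to $[\fp_n f] = 0$ in $H^2(G;\ZZ/n\ZZ)$, which in turn is equivalent (via the correspondence between $\vE(G,\ZZ/n\ZZ)$ and $H^2(G;\ZZ/n\ZZ)$) to the central extension associated to $\fp_n f$ being equivalent to the trivial (direct product) extension.

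For the ``if'' direction, suppose $f$ is a circular ordering on $G$ with $[f] \in \ker(\fP_n)$, so that $[\fp_n f] = 0$. Proposition~\ref{prop:circularly-orderable-finite-extension} gives that the associated central extension $\wt G_n$ is circularly-orderable. The triviality of the cohomology class forces $\wt G_n$ to be equivalent to the trivial extension, and so $\wt G_n \cong G \times \ZZ/n\ZZ$ as groups; circular orderability is an isomorphism invariant, so this direction is complete.

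For the ``only if'' direction, suppose $\hat f$ is a circular ordering on $G \times \ZZ/n\ZZ$. The subgroup $K = \{id\} \times \ZZ/n\ZZ$ is a finite cyclic normal subgroup, hence central by Proposition~\ref{prop:central-cyclic-subgroup}. Letting $z \in K$ be its minimal generator with respect to $\hat f$, I apply Proposition~\ref{prop:finite-CO-quotient} to obtain a circular ordering $\bar f$ on $G \cong (G \times \ZZ/n\ZZ)/K$ together with a normalized section $\nu$ of the extension
\[ 1 \to \ZZ/n\ZZ \xrightarrow{\iota} G \times \ZZ/n\ZZ \xrightarrow{\pi} G \to 1, \]
where $\iota([1]) = z$ and $\pi$ is the quotient, satisfying $\fp_n \bar f = f_\nu$. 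The key point is that this extension splits: the homomorphism $g \mapsto (g, id)$ is a section of $\pi$. Hence the associated cohomology class vanishes, $[f_\nu] = 0$ in $H^2(G;\ZZ/n\ZZ)$, and we conclude $\fP_n([\bar f]) = [\fp_n \bar f] = [f_\nu] = 0$, so $[\bar f] \in \ker(\fP_n)$.

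The main obstacle is the forward direction, where one must verify that the central extension arising from Proposition~\ref{prop:finite-CO-quotient} really does have trivial class, even though the identification $\iota$ depends on the choice of minimal generator $z$ determined by $\hat f$. This is resolved by the observation that the direct product structure supplies a canonical splitting of $\pi$ irrespective of which generator of $K$ plays the role of $z$, so the extension class is $0$ regardless, and the cohomological independence of $[f_\nu]$ from the choice of section lets us transfer this computation to the specific $\nu$ produced by Proposition~\ref{prop:finite-CO-quotient}.
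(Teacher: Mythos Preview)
Your proof is correct and follows essentially the same approach as the paper: both directions invoke Propositions~\ref{prop:circularly-orderable-finite-extension} and~\ref{prop:finite-CO-quotient} in the same way, and both use the splitting of $1 \to \ZZ/n\ZZ \to G \times \ZZ/n\ZZ \to G \to 1$ to conclude $[f_\nu]=0$. Your extra paragraph addressing the dependence of $\iota$ on the minimal generator is a point the paper leaves implicit, but as you observe, the projection onto $G$ has the section $g \mapsto (g,id)$ regardless of which generator of $K$ is chosen as the image of $[1]$, so the extension class vanishes in any case.
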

\begin{proof}
Suppose $G \times \ZZ/n\ZZ$ is circularly-orderable. Then by Proposition \ref{prop:finite-CO-quotient}, there exists a circular ordering $f$ of $G$ such that $\mathfrak p_n f = f_\nu$ for some section $\nu$ of the short exact sequence $1 \to \ZZ/n\ZZ \to G \times \ZZ/n\ZZ \to G \to 1$. However, the short exact sequence is split, so $[f_\nu] = 0$ in $H^2(G;\ZZ/n\ZZ)$. Therefore $[f] \in \ker(\fP_n)$.

Conversely, suppose there is a circular ordering $f$ on $G$ such that $[\mathfrak p_n f] = 0 \in H^2(G;\ZZ/n\ZZ)$. By Proposition \ref{prop:circularly-orderable-finite-extension}, the central extension $\wt G_n$ associated to $\mathfrak p_n f$ is circularly-orderable. Since $[\mathfrak p_n f]$ is the trivial cohomology class, $\wt G_n \cong G \times \ZZ/n\ZZ$.
\end{proof}

We are now ready to prove the main theorem. 

\begin{thm}\label{cor:main}
The group $G \times \ZZ/n\ZZ$ is circularly-orderable if and only if there exists a circular ordering $f$ on $G$ and an element $\mu \in H^2(G;\ZZ)$ such that $n\mu = [f]$.
\end{thm}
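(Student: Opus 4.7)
The plan is to reduce Theorem \ref{cor:main} to Lemma \ref{thm:main} via a standard long exact sequence in group cohomology. Lemma \ref{thm:main} already gives the equivalence between circular orderability of $G \times \ZZ/n\ZZ$ and the existence of a circular ordering $f$ on $G$ with $[f] \in \ker(\fP_n)$. So the remaining task is purely cohomological: identify $\ker(\fP_n) \subset H^2(G;\ZZ)$ with the image of multiplication by $n$.

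First, I would invoke the short exact sequence of coefficient groups
\[
0 \lra \ZZ \overset{\cdot n}{\lra} \ZZ \overset{\mathfrak p_n}{\lra} \ZZ/n\ZZ \lra 0,
\]
and write down a piece of the induced long exact sequence in group cohomology (with trivial $G$-action):
\[
\cdots \lra H^2(G;\ZZ) \overset{\cdot n}{\lra} H^2(G;\ZZ) \overset{\fP_n}{\lra} H^2(G;\ZZ/n\ZZ) \lra \cdots.
\]
By exactness, $\ker(\fP_n)$ is precisely the image of multiplication by $n$ on $H^2(G;\ZZ)$; equivalently, a class $\alpha \in H^2(G;\ZZ)$ lies in $\ker(\fP_n)$ if and only if $\alpha = n\mu$ for some $\mu \in H^2(G;\ZZ)$.

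Combining this identification with Lemma \ref{thm:main} gives the desired equivalence: $G \times \ZZ/n\ZZ$ is circularly-orderable if and only if there is a circular ordering $f$ on $G$ with $[f] = n\mu$ for some $\mu \in H^2(G;\ZZ)$. I do not anticipate a real obstacle here; the only thing to verify is that the connecting map in the long exact sequence that we inherit from the coefficient sequence is indeed induced by post-composition with $\mathfrak{p}_n$, which matches the definition of $\fP_n$ given just before Lemma \ref{thm:main}. This is standard, so the proof should amount to little more than citing Lemma \ref{thm:main} together with the long exact sequence in cohomology.
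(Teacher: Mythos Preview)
Your proposal is correct and follows essentially the same route as the paper's proof: reduce to Lemma \ref{thm:main}, then use the long exact sequence in cohomology associated to the coefficient sequence $0 \to \ZZ \xrightarrow{\cdot n} \ZZ \xrightarrow{\mathfrak p_n} \ZZ/n\ZZ \to 0$ to identify $\ker(\fP_n)$ with $nH^2(G;\ZZ)$. One small terminological slip: the map $\fP_n$ you need to match is the change-of-coefficients map induced by $\mathfrak p_n$, not the connecting homomorphism of the long exact sequence; otherwise the argument is exactly as in the paper.
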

\begin{proof}
By Lemma \ref{thm:main}, it suffices to show that $[f] \in \ker(\fP_n)$ if and only if there exists a $\mu$ such that $n\mu = [f]$. Consider the short exact sequence
\[
1 \lra \ZZ \overset{\mathfrak i_n}\lra \ZZ \overset{\mathfrak p_n}\lra \ZZ/n\ZZ \lra 1
\]
where $\mathfrak i_n(a) = na$. Let $\fI_n:H^2(G;\ZZ) \to H^2(G;\ZZ)$ be the homomorphism given by $\fI_n([f]) = [\mathfrak i_n f]$. By the long exact sequence of cohomology, $\im(\fI_n) = \ker (\fP_n)$. The proof is completed by observing that $\fI_n\mu = n\mu$ for all $\mu \in H^2(G;\ZZ)$. 
\end{proof}

\section{Left-orderability, rigidity and the obstruction spectrum}\label{sec:rigidity}

Let $G$ be a circularly-orderable group. Recall from the introduction that the {\it obstruction spectrum of $G$} is given by
\[
\Ob(G) = \{n \in \NN_{\geq 2} \mid G \times \ZZ/n\ZZ \text{ is not circularly-orderable}\}.
\]
By Theorem \ref{thm:motivation}, $\Ob(G)$ is empty if and only if $G$ is left-orderable.

We begin by cohomologically characterizing the obstruction spectrum and giving some tools to compute it in several cases.  First, by Theorem \ref{cor:main} we arrive at the following, which also proves Theorem \ref{thm:LO_characterization} from the introduction. Recall that an element $\alpha \in H^2(G;\ZZ)$ is {\it $n$-divisible} if there exists $\mu \in H^2(G;\ZZ)$ such that $n\mu = \alpha$.

\begin{thm}
\label{thm:ob_characterization}
Let $G$ be a group. If 
\[ C(G) = \{ \alpha \in H^2(G; \mathbb{Z}) \mid \mbox{there exists a circular ordering of $G$ with $[f] = \alpha$} \},
\]
then $$\mathrm{Ob}(G) = \{ n \in  \NN_{\geq 2} \mid \mbox{there is no $n$-divisible element of $C(G)$} \}.$$ 
\end{thm}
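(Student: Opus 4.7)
The statement is essentially a direct translation of Theorem \ref{cor:main} into the language of the obstruction spectrum, so the plan is short.

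My plan is to unpack both sides of the claimed equality and verify them via Theorem \ref{cor:main}. By definition, $n \notin \mathrm{Ob}(G)$ means $G \times \ZZ/n\ZZ$ is circularly-orderable. By Theorem \ref{cor:main}, this is equivalent to the existence of a circular ordering $f$ on $G$ together with $\mu \in H^2(G;\ZZ)$ satisfying $n\mu = [f]$. Since $[f]$ belongs to $C(G)$ by the definition of $C(G)$, this is exactly the statement that $C(G)$ contains an $n$-divisible element. Taking contrapositives, $n \in \mathrm{Ob}(G)$ if and only if no element of $C(G)$ is $n$-divisible, which is precisely the right-hand side.

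The only small issue to dispose of is the case where $G$ itself is not circularly-orderable. In that situation $C(G) = \emptyset$, so no $n$-divisible element exists and the right-hand side equals $\NN_{\geq 2}$. On the other hand, any subgroup of a circularly-orderable group is circularly-orderable (via the embedding into $\mathrm{Homeo}_+(S^1)$, or by restriction of the cocycle), so $G$ embeds in $G \times \ZZ/n\ZZ$ forces the latter to fail to be circularly-orderable as well; hence $\mathrm{Ob}(G) = \NN_{\geq 2}$. I would make this observation explicit to show both sides agree uniformly, rather than assuming $G$ is circularly-orderable from the outset.

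There is no real obstacle here: all of the substantive content sits in Theorem \ref{cor:main}, and Theorem \ref{thm:ob_characterization} is the cohomological reformulation obtained by collecting, over all choices of circular ordering $f$, the cohomology classes $[f]$ into the set $C(G)$ and rephrasing $n$-divisibility of $[f]$ as membership of an $n$-divisible element of $C(G)$.
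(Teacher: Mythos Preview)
Your proposal is correct and matches the paper's approach: the paper presents this theorem as an immediate consequence of Theorem~\ref{cor:main} without giving a separate proof, and your argument is exactly the unpacking of that implication. Your explicit handling of the degenerate case $C(G)=\emptyset$ is a nice touch that the paper leaves implicit.
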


In light of this, certain elements will always appear in the obstruction spectrum of  given group.  Recall that the {\it exponent} $e$ of a group $A$ is the smallest positive integer such that $a^e = id$ for all $a \in A$. In our case, $A$ will be an abelian group, so we will write $a^e$ as $ea$. 

\begin{prop}
\label{prop:obs_facts}
Let $G$ be a circularly-orderable group such that $H^2(G;\ZZ)$ has finite exponent $e \geq 2$.
\begin{enumerate}
\item If $\gcd(n,e) = 1$, then $n \notin \Ob(G)$.
\item If $G$ is not left-orderable, then $e \in \Ob(G)$.
\item If $G$ is not left-orderable and $e$ is prime, then $\Ob(G) = e\NN$.
\end{enumerate} 
\end{prop}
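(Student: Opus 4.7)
The plan is to reduce all three parts to Theorem \ref{thm:ob_characterization}, which rewrites membership in $\Ob(G)$ as the non-existence of an $n$-divisible element in $C(G)$. The three ingredients I will lean on throughout are: $C(G) \neq \emptyset$ because $G$ is circularly-orderable, $e\alpha = 0$ for every $\alpha \in H^2(G;\ZZ)$ by definition of the exponent, and Remark \ref{rmk:id-implies-lo}, which guarantees that if $0 \in C(G)$ then $G$ is left-orderable.

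For (1), I would fix any $\alpha \in C(G)$ and use Bezout to write $an + be = 1$; then $\alpha = an\alpha + be\alpha = n(a\alpha)$ because $e\alpha = 0$. This exhibits $\alpha$ itself as $n$-divisible, so $n \notin \Ob(G)$. For (2), I would argue by contraposition: if $e \notin \Ob(G)$, Theorem \ref{thm:ob_characterization} supplies $\alpha \in C(G)$ with $\alpha = e\mu$ for some $\mu$, but the exponent hypothesis forces $e\mu = 0$, so $0 \in C(G)$; then Remark \ref{rmk:id-implies-lo} makes $G$ left-orderable, contrary to assumption.

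For (3), the inclusion $e\NN \subseteq \Ob(G)$ is a direct rerun of (2): any $\alpha = me\mu \in C(G)$ satisfies $\alpha = m(e\mu) = 0$, which would left-order $G$. Conversely, given $n \in \Ob(G)$, part (1) forbids $\gcd(n,e) = 1$, and primality of $e$ promotes this to $e \mid n$, giving $\Ob(G) \subseteq e\NN$.

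I do not anticipate any genuine obstacle; the entire statement is a bookkeeping consequence of Theorem \ref{thm:ob_characterization} combined with the annihilating action of $e$ on $H^2(G;\ZZ)$. The only care needed is to recognise that \emph{$e$-divisible} collapses to \emph{equal to zero} in the presence of a finite exponent $e$, which is precisely what makes Remark \ref{rmk:id-implies-lo} applicable in (2) and (3).
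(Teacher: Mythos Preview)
Your proposal is correct and follows essentially the same approach as the paper: both use B\'ezout together with $e\alpha=0$ for (1), both observe that $e$-divisibility forces the class to be zero and then invoke Remark~\ref{rmk:id-implies-lo} for (2), and both derive (3) from (1) and (2). The only cosmetic difference is that the paper cites Theorem~\ref{cor:main} directly whereas you go through its reformulation Theorem~\ref{thm:ob_characterization}, and you spell out the two inclusions in (3) where the paper simply says it follows from (1) and (2).
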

\begin{proof}
For (1), let $f$ be a circular ordering of $G$. Let $x,y \in \ZZ$ be such that $nx + ey = 1$. Then $[f] = (nx + ey)[f] = n(x[f])$, so by Theorem \ref{cor:main}, $n \notin \Ob(G)$.

For (2), note that if $\sigma = e\mu$ in $H^2(G;\ZZ)$, then $\sigma = 0$. However, since $G$ is not left-orderable, there is no circular ordering $f$ such that $[f] = 0 \in H^2(G;\ZZ)$ (see Remark \ref{rmk:id-implies-lo}). Therefore, by Theorem \ref{cor:main}, $e \in \Ob(G)$.

Statement (3) follows immediately from (1) and (2).
\end{proof}

\subsection{Rigidity and mapping class groups.}  
\label{mappingclass}
Let $S_{g,1}$ denote a genus $g\geq 2$ surface with one marked point $p$, and let $\Mod(S_{g,1})$ denote the mapping class group.  Recall that $\Mod(S_{g,1})$ acts on $S^1$ as follows.

Fix a hyperbolic metric on $S_{g,1}$ and choose a lift $\tilde{p} \in \mathbb{H}^2$ of $p$.  Each $\phi \in \Mod(S_{g,1})$ can be represented by $f: S_{g, 1} \rightarrow S_{g,1}$ with $f(p) =p$, and each such $f$ admits a unique lift $\tilde{f}: \mathbb{H}^2 \rightarrow \mathbb{H}^2$ with $\tilde{f}( \tilde{p}) = \tilde{p}$.  This gives an action of $\Mod(S_{g,1})$ on $\mathbb{H}^2$, which extends to an action on  $\partial \mathbb{H}^2 \cong S^1$.  Call the resulting action of $\Mod(S_{g,1})$ on $S^1$ the \textit{standard action}.\footnote{Changing the choice of hyperbolic metric or of $p \in S_{g,1}$ changes the resulting action on $S^1$ to a new action that semiconjugate to the original.  Therefore the standard action is, according to our definition, only defined up to semiconjugacy.}

Here is the usual recipe for creating a circular ordering $f$ of a subgroup $G$ of $\mathrm{Homeo}_+(S^1)$.  Fix an enumeration of a countable dense subset of $S^1$, say $\{r_0, r_1, \dots \}$ and let $f_{circ}$ denote the usual circular ordering of $S^1$.  Given $g_1, g_2 \in G$, let $m$ denote the smallest index such that $g_1(r_m) \neq g_2(r_m)$. Set $f(g_1, g_2) = f_{circ}(g_1(r_m), g_2(r_m))$.  Evidently, different choices of countable dense subset or different choices of enumeration will yield different circular orderings, however the resulting circular orderings will all represent the same class in bounded cohomology. 

Let $f_{s}$ denote a circular ordering of $\Mod(S_{g,1})$ arising from the standard action via the recipe described in the previous paragraph.  Recently Mann and Wolff proved the following rigidity result.

\begin{thm}\cite{MWpreprint}
Let $g \geq 2$. If $f$ is any circular ordering of $\Mod(S_{g,1})$, then $[f] = [f_s]$ in $H^2_b(\Mod(S_{g,1}); \ZZ)$.
\end{thm}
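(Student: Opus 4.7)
The plan is to recast the statement as a rigidity theorem about circle actions. Any circular ordering $f$ on a countable group $G$ determines a \emph{dynamical realization} homomorphism $\rho_f\colon G \to \Homeo_+(S^1)$, canonical up to semiconjugacy, and the bounded class $[f] \in H^2_b(G;\ZZ)$ equals the pullback $\rho_f^*\, e^b$ of the bounded Euler class of $\Homeo_+(S^1)$. By Matsumoto's theorem, two homomorphisms into $\Homeo_+(S^1)$ share the same pulled-back bounded Euler class if and only if they are semiconjugate. So the theorem becomes the assertion that every $\rho_f$ arising from a circular ordering of $\Mod(S_{g,1})$ is semiconjugate to the standard action $\rho_s$.

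First I would observe that $\rho_f^*\, e^b \neq 0$ in $H^2_b(\Mod(S_{g,1});\ZZ)$. Since $\Mod(S_{g,1})$ has torsion for $g \geq 2$ (for instance, a rotational symmetry fixing the puncture), it is not left-orderable, and hence by Remark~\ref{rmk:id-implies-lo} no circular ordering of $\Mod(S_{g,1})$ represents the trivial cohomology class. Next I would use this nonvanishing to rule out global fixed points and finite orbits of $\rho_f$, so that $\rho_f$ is semiconjugate to a minimal action. The main step is then to show that any minimal circle action of $\Mod(S_{g,1})$ with nonzero bounded Euler class is semiconjugate to $\rho_s$. For this, following the strategy of Mann and Mann--Wolff, I would analyze the joint dynamics of large commuting subgroups generated by Dehn twists along disjoint multicurves, together with the parabolic-type subgroup stabilizing the puncture. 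These algebraic constraints should force a matching between the fixed-point data of $\rho_f$ on $S^1$ and that of $\rho_s$ on $\partial\mathbb{H}^2$, which one then closes up to produce an equivariant monotone map $S^1 \to S^1$, i.e.\ a semiconjugacy.

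The main obstacle will be this last step. Rigidity of this strength fails for closed surfaces of genus $\geq 2$, where the standard action does not even exist, so the proof must crucially exploit the puncture: elements in the point-stabilizer correspond to parabolic isometries with a distinguished fixed point on $\partial \mathbb{H}^2$, and that structure must be recovered dynamically in any action of $\Mod(S_{g,1})$ with nonzero Euler class. Extracting enough dynamical information from the algebraic relations — in particular, pinning down the rotation-number and North--South-type behavior of Dehn twists and hyperelliptic involutions acting on $S^1$ — is delicate and requires careful case analysis of commuting subgroups and their joint invariants. Once this dynamical dictionary is set up, the semiconjugacy is produced by matching parabolic and periodic fixed-point sets and extending by monotonicity, but establishing that dictionary is the heart of the argument.
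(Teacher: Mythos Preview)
The statement you are attempting to prove is not a result of this paper: it is quoted verbatim from Mann--Wolff (the citation \texttt{[MWpreprint]} attached to the theorem environment) and the paper provides no proof of its own. There is therefore nothing in the present paper to compare your proposal against; the authors simply invoke this rigidity theorem as a black box and then combine it with the computation from Farb--Margalit that $[f_s]$ maps to a primitive element of $H^2(\Mod(S_{g,1});\ZZ) \cong \ZZ^2$.

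Your sketch is a plausible high-level description of the Mann--Wolff strategy (dynamical realization, Ghys--Matsumoto characterization of semiconjugacy via the bounded Euler class, reduction to a minimal action, and then a delicate analysis of Dehn-twist subgroups to build an equivariant monotone map), but it is an outline of someone else's paper, not of this one. If your goal is to supply a self-contained proof here, be aware that the ``dynamical dictionary'' step you flag as the main obstacle is genuinely the entire content of Mann--Wolff's work and cannot be condensed into a paragraph; if your goal is only to reproduce what the present paper does, then no proof is required---you should simply cite the result.
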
 

Recall that for every group $G$ there is a map $H^2_b(G; \ZZ) \lra H^2(G; \ZZ)$.  When $G = \Mod(S_{g,1})$, the image of $[f_s]$ under this map is computed in \cite[Sections 5.5.3--5.5.5]{FM12} as the Euler class of the central extension 
\[ 1 \lra \ZZ \lra \widetilde{\Mod}(S_{g,1}) \lra \Mod(S_{g,1}) \lra 1.
\]
They show that it is a primitive element of $H^2(\Mod(S_{g,1}); \ZZ) \cong \ZZ^2$.  Combining this result with the work of Mann and Wolff, we have the following.

\begin{cor}
Let $g \geq 2$. If $f$ is any circular ordering of $\Mod(S_{g,1})$, then $[f] = [f_s]$ is a primitive element of $H^2(\Mod(S_{g,1}); \ZZ)$.
\end{cor}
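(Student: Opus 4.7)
The plan is to derive the corollary by pushing the Mann--Wolff equality forward through the natural comparison map from bounded cohomology to ordinary cohomology; the proof is short and amounts to assembling the two cited results.

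First I would record that any circular ordering $f$ of $\Mod(S_{g,1})$, in the sense of Definition \ref{def:CO-inhom}, takes values in $\{0,1\}$ and is therefore automatically a bounded inhomogeneous $2$-cocycle. Consequently $f$ represents a class in $H^2_b(\Mod(S_{g,1});\ZZ)$, and the Mann--Wolff theorem recalled just above applies to give the equality $[f] = [f_s]$ in $H^2_b(\Mod(S_{g,1});\ZZ)$.

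Next I would push this equality through the natural comparison homomorphism $c \colon H^2_b(\Mod(S_{g,1});\ZZ) \to H^2(\Mod(S_{g,1});\ZZ)$. Since $c$ is induced by the inclusion of bounded cochains into all inhomogeneous cochains, it sends the bounded cohomology class of a cocycle to its ordinary cohomology class. Applying $c$ to $[f] = [f_s]$ in bounded cohomology therefore yields $[f] = [f_s]$ in $H^2(\Mod(S_{g,1});\ZZ)$.

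Finally, by the Farb--Margalit computation recalled above, $c([f_s])$ is identified with the Euler class of the central extension $1 \to \ZZ \to \wt{\Mod}(S_{g,1}) \to \Mod(S_{g,1}) \to 1$, and this Euler class is a primitive element of $H^2(\Mod(S_{g,1});\ZZ) \cong \ZZ^2$. Combining the three steps gives the corollary. There is no real obstacle here: the content lives entirely in the two cited theorems, and the only thing to check is that the comparison map correctly intertwines the bounded and unbounded statements, which is immediate from its construction as the map induced by inclusion of cochains.
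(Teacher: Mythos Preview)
Your proof is correct and matches the paper's approach exactly: the paper also derives the corollary by applying the comparison map $H^2_b \to H^2$ to the Mann--Wolff equality in bounded cohomology and then invoking the Farb--Margalit computation that the image of $[f_s]$ is primitive. You have simply written out explicitly the step---that the comparison map carries the bounded equality $[f]=[f_s]$ to the same equality in ordinary cohomology---which the paper summarizes as ``combining this result with the work of Mann and Wolff.''
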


Combining this fact with Theorem \ref{thm:ob_characterization}, we immediately arrive at the following.

\begin{cor}\label{cor:mod-ob}
Let $g \geq 2$. With notation as above,
$\Ob(\Mod(S_{g,1})) = \NN_{\geq 2}$.
\end{cor}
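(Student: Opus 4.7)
The plan is to apply Theorem \ref{thm:ob_characterization} directly, using the rigidity statement just established as the preceding corollary. By that corollary, every circular ordering $f$ of $\Mod(S_{g,1})$ satisfies $[f] = [f_s]$ in $H^2(\Mod(S_{g,1});\ZZ)$, so the set $C(\Mod(S_{g,1}))$ from Theorem \ref{thm:ob_characterization} consists of the single class $[f_s]$. Thus $n \in \Ob(\Mod(S_{g,1}))$ if and only if $[f_s]$ is not $n$-divisible.

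Now I would invoke the fact, also noted just before the preceding corollary, that $[f_s]$ is a primitive element of $H^2(\Mod(S_{g,1});\ZZ) \cong \ZZ^2$. Primitivity in $\ZZ^2$ means that whenever $[f_s] = n\mu$ for $n \in \ZZ$ and $\mu \in \ZZ^2$, one must have $n = \pm 1$. In particular, for every $n \geq 2$ there is no $\mu \in H^2(\Mod(S_{g,1});\ZZ)$ with $n\mu = [f_s]$; equivalently, $[f_s]$ is not $n$-divisible for any $n \geq 2$.

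Combining these two observations with Theorem \ref{thm:ob_characterization} yields $\Ob(\Mod(S_{g,1})) = \NN_{\geq 2}$. There is essentially no obstacle once the two ingredients (rigidity and primitivity) are in hand, since the divisibility criterion of Theorem \ref{thm:ob_characterization} and the singleton nature of $C(\Mod(S_{g,1}))$ leave no room to maneuver.
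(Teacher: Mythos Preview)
Your proof is correct and follows essentially the same approach as the paper, which simply states that the result is immediate from combining the preceding corollary (rigidity plus primitivity of $[f_s]$) with Theorem~\ref{thm:ob_characterization}. You have merely spelled out the two-line deduction that the paper leaves implicit.
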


Note that $\Mod(S_{g,1})$ contains torsion elements, and thus $m \NN \subset \Ob(\Mod(S_{g,1}))$ for every $m \in  \NN_{\geq 2}$ that is not relatively prime to the order of every torsion element in $\Mod(S_{g,1})$.  However, the order of the torsion elements in $\Mod(S_{g,1})$ is bounded above by $4g+2$.  Thus $\Ob(\Mod(S_{g,1})) = \NN_{\geq 2}$ is in fact a reflection of the rigidity results of Mann and Wolff, and not a consequence of torsion.

\subsection{Fundamental groups of $3$-manifolds.}

We begin by noting a potential application to the $L$-space conjecture \cite{BGW, Juhasz}.

\begin{prop}\label{prop:L-space}
Suppose $M$ is a connected, orientable, irreducible, rational homology 3-sphere. Suppose that $n$ is the exponent of $H_1(M;\ZZ)$. Then $\pi_1(M)$ is left-orderable if and only if $\pi_1(M) \times \ZZ/n\ZZ$ is circularly-orderable.
\end{prop}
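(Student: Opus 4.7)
The forward direction is immediate from Theorem \ref{thm:motivation}, so I focus on the converse. Assume $\pi_1(M) \times \ZZ/n\ZZ$ is circularly-orderable; since circular orderability passes to subgroups, $\pi_1(M)$ is itself circularly-orderable. By Theorem \ref{cor:main}, there is a circular ordering $f$ of $\pi_1(M)$ together with $\mu \in H^2(\pi_1(M);\ZZ)$ satisfying $[f] = n\mu$. The plan is to show that $n$ annihilates $H^2(\pi_1(M);\ZZ)$, which forces $[f] = 0$ and hence, by Remark \ref{rmk:id-implies-lo}, left-orderability of $\pi_1(M)$.

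To control $H^2(\pi_1(M);\ZZ)$, I split on whether $\pi_1(M)$ is infinite. If it is, then irreducibility together with the Sphere Theorem gives $\pi_2(M) = 0$; the universal cover $\widetilde M$ is then a non-compact, simply connected $3$-manifold with $\pi_2 = 0$, so a Hurewicz argument shows it is acyclic and hence contractible. Thus $M$ is aspherical, so $H^2(\pi_1(M);\ZZ) = H^2(M;\ZZ)$, and Poincar\'e duality identifies the latter with $H_1(M;\ZZ)$, which by definition of $n$ has exponent $n$.

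If instead $\pi_1(M)$ is finite, then $\pi_1(M) \times \ZZ/n\ZZ$ is a finite circularly-orderable group, hence cyclic by Proposition \ref{prop:finite-CO-is-cyclic}. In particular $\pi_1(M) \cong \ZZ/m\ZZ$ for some $m$; abelianization gives $n = m$, and $H^2(\ZZ/m\ZZ;\ZZ) \cong \ZZ/m\ZZ$ is annihilated by $n$. In either case $n\mu = 0$, so $[f] = 0$, and Remark \ref{rmk:id-implies-lo} then yields the desired left-orderability.

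The main obstacle is establishing asphericity in the infinite case, which is precisely where irreducibility enters essentially, through the Sphere Theorem. Once asphericity is available, the identification $H^2(\pi_1(M);\ZZ) \cong H_1(M;\ZZ)$ is a standard application of Poincar\'e duality, and the remainder of the argument is bookkeeping with the divisibility criterion of Theorem \ref{cor:main}.
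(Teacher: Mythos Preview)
Your argument is correct and follows essentially the same route as the paper's proof. Both identify $H^2(\pi_1(M);\ZZ)\cong H_1(M;\ZZ)$ via asphericity and Poincar\'e duality in the infinite case, and both use that $n$ annihilates this group together with the $n$-divisibility criterion (Theorem \ref{cor:main}) and Remark \ref{rmk:id-implies-lo}; the paper packages this step as an invocation of Proposition \ref{prop:obs_facts}(2), which you have simply unwound. The only minor difference is in the finite case: the paper observes directly that $\ZZ/n\ZZ\times\ZZ/n\ZZ$ is not cyclic and derives a contradiction, whereas you run the same $H^2$ computation as in the infinite case --- valid, but slightly more circuitous, since your chain of implications in that case actually proves left-orderability of a nontrivial finite group, revealing that the hypotheses are inconsistent there.
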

\begin{proof}
If $\pi_1(M)$ is left-orderable, then $\pi_1(M) \times \ZZ/n\ZZ$ is circularly-orderable by a lexicographic argument.
Conversely, if $\pi_1(M)$ is finite and $\pi_1(M) \times \ZZ/n\ZZ$ is circularly-orderable, it must be that $\pi_1(M)$ is cyclic. However, it must be cyclic of order $n$, a contradiction. Therefore we may assume $\pi_1(M)$ is infinite. In this case, $M$ is a $K(\pi_1(M),1)$ and $H^2(\pi_1(M);\ZZ) \cong H^2(M;\ZZ) \cong H_1(M;\ZZ)$ by Poincar\'e duality. Thus, the exponent of $H^2(\pi_1(M);\ZZ)$ is $n$. Since $\pi_1(M) \times \ZZ/n\ZZ$ is circularly-orderable, $n \notin \Ob(\pi_1(M))$. Therefore by Proposition \ref{prop:obs_facts}(2), $\pi_1(M)$ is left-orderable.
\end{proof}

Using Proposition \ref{prop:obs_facts}, we can now deal with many of the 3-manifold examples raised in \cite{BCGpreprint}.

\begin{prop}
\label{3manifolds}
Suppose that $M$ is an irreducible rational homology $3$-sphere such that $\pi_1(M)$ is not left-orderable, but is circularly-orderable.  If the exponent of $H_1(M ; \ZZ)$ is a prime $p$, then $\Ob(\pi_1(M)) = p \NN$.
\end{prop}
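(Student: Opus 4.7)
The strategy is to reduce the claim to Proposition \ref{prop:obs_facts}(3) applied to $G = \pi_1(M)$. That result says that for a circularly-orderable, non-left-orderable group $G$ whose second integral cohomology has prime exponent $e$, one has $\Ob(G) = e\NN$. Since $\pi_1(M)$ is assumed circularly-orderable but not left-orderable, and $p$ is prime by hypothesis, it suffices to identify the exponent of $H^2(\pi_1(M);\ZZ)$ as $p$.

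To compute this exponent, I would split into cases exactly as in the proof of Proposition \ref{prop:L-space}. If $\pi_1(M)$ is infinite, then because $M$ is closed, orientable and irreducible, standard 3-manifold topology (the sphere theorem together with contractibility of the universal cover of an irreducible 3-manifold with infinite fundamental group) tells us that $M$ is aspherical, hence a $K(\pi_1(M),1)$. Therefore $H^2(\pi_1(M);\ZZ) \cong H^2(M;\ZZ)$, and Poincar\'e duality in the closed orientable 3-manifold $M$ identifies $H^2(M;\ZZ) \cong H_1(M;\ZZ)$, which has exponent $p$ by assumption. Proposition \ref{prop:obs_facts}(3) then gives the conclusion.

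The remaining case is $\pi_1(M)$ finite, which needs a separate argument because $M$ is no longer aspherical. Here I would use Proposition \ref{prop:finite-CO-is-cyclic}: a finite circularly-orderable group is cyclic, so $\pi_1(M)$ is finite cyclic. Because $\pi_1(M)$ is then abelian, $H_1(M;\ZZ) \cong \pi_1(M)$, and the exponent hypothesis forces $\pi_1(M) \cong \ZZ/p\ZZ$. For this group a direct calculation gives $H^2(\ZZ/p\ZZ;\ZZ) \cong \ZZ/p\ZZ$, whose exponent is again $p$, so Proposition \ref{prop:obs_facts}(3) applies.

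The main (and only) obstacle is handling the finite case cleanly, since the Poincar\'e duality route used in Proposition \ref{prop:L-space} relies on asphericity and so fails for $M$ with finite fundamental group. Once one notices that circular orderability plus the prime exponent hypothesis already pins down $\pi_1(M) \cong \ZZ/p\ZZ$, this case becomes trivial and the two cases assemble into the stated equality $\Ob(\pi_1(M)) = p\NN$.
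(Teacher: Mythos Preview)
Your proposal is correct and follows essentially the same route as the paper: split into the infinite and finite fundamental group cases, use asphericity and Poincar\'e duality in the former to identify $H^2(\pi_1(M);\ZZ)\cong H_1(M;\ZZ)$, and then apply Proposition~\ref{prop:obs_facts}(3). The only cosmetic difference is that in the finite case the paper simply notes $\pi_1(M)\cong\ZZ/p\ZZ$ and says ``the result follows'' (one can read this as the elementary observation that $\ZZ/p\ZZ\times\ZZ/n\ZZ$ is cyclic iff $\gcd(p,n)=1$), whereas you compute $H^2(\ZZ/p\ZZ;\ZZ)\cong\ZZ/p\ZZ$ so as to invoke Proposition~\ref{prop:obs_facts}(3) uniformly in both cases.
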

\begin{proof}
If $\pi_1(M)$ is infinite and $M$ is irreducible, then $H^2(\pi_1(M); \ZZ) \cong H^2(M; \ZZ) \cong H_1(M; \ZZ)$.  The result then follows from Proposition \ref{prop:obs_facts}(3).  On the other hand if $\pi_1(M)$ is finite then it is cyclic of order $p$, and the result follows.
\end{proof}

When $H_1(M ; \ZZ)$ is cyclic of prime order $p$, it is possible to give a straightforward description of the circular orderings of the products $\pi_1(M) \times \ZZ / n\ZZ$ when $n$ is not a multiple of $p$.  We describe them in the lemma and proposition below, but in the more general setting of a group $G$ having certain restrictions on its homology.

\begin{lem}
Let $G$ be a group with $H_2(G; \ZZ) = 0$.  If $G$ is circularly-orderable, then $[G, G]$ is left-orderable.
\label{lemma:locommutator}
\end{lem}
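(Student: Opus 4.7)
The plan is to show that $[G,G]$ lifts isomorphically into the left-ordered central extension $\wt G_\infty$ from Construction~\ref{const:LO-central-extension}, and thereby inherits a left ordering as a subgroup of a left-ordered group.

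Concretely, I would fix a circular ordering $f$ on $G$ and form the associated left-ordered central extension
\[
1 \to \ZZ \overset{\iota}\to \wt G_\infty \overset{\rho}\to G \to 1.
\]
Since $\rho$ carries commutators to commutators, its restriction to $[\wt G_\infty, \wt G_\infty]$ surjects onto $[G, G]$, and the kernel of this restriction is precisely $\iota(\ZZ) \cap [\wt G_\infty, \wt G_\infty]$. The whole argument therefore reduces to showing that this intersection is trivial, and this is the only step where the hypothesis $H_2(G;\ZZ) = 0$ enters.

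To handle that step, I would invoke the five-term exact sequence in homology (Lyndon--Hochschild--Serre) for the central extension, which reads
\[
H_2(\wt G_\infty;\ZZ) \to H_2(G;\ZZ) \to \ZZ \to H_1(\wt G_\infty;\ZZ) \to H_1(G;\ZZ) \to 0.
\]
The middle map $\ZZ \to H_1(\wt G_\infty;\ZZ) = \wt G_\infty^{\mathrm{ab}}$ is induced by $\iota$, so its kernel is exactly $\iota(\ZZ) \cap [\wt G_\infty, \wt G_\infty]$. The hypothesis $H_2(G;\ZZ) = 0$ makes this map injective, so the desired intersection vanishes. It then follows that $\rho$ restricts to an isomorphism $[\wt G_\infty, \wt G_\infty] \cong [G, G]$, and since every subgroup of a left-orderable group is left-orderable, $[G, G]$ is left-orderable. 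There is no real obstacle beyond correctly identifying the middle map in the five-term sequence; the homological hypothesis does all of the genuine work.
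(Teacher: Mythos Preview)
Your argument is correct and follows the same strategy as the paper: both embed $[G,G]$ into the left-ordered central extension $\wt G_\infty$ and conclude. The only difference is packaging---the paper invokes \cite[Lemma~3.10]{BRW05} as a black box to produce the injective lift $[G,G]\hookrightarrow \wt G_\infty$, whereas you unpack that step directly via the five-term exact sequence for the central extension, showing $\iota(\ZZ)\cap[\wt G_\infty,\wt G_\infty]=0$ and hence $\rho$ restricts to an isomorphism $[\wt G_\infty,\wt G_\infty]\cong[G,G]$. Your version is self-contained and makes explicit exactly where the hypothesis $H_2(G;\ZZ)=0$ is used.
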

\begin{proof}
There is a left-ordered central extension
\[
1 \lra \ZZ \lra \wt G_\infty \lra G \lra 1,
\]
and by \cite[Lemma 3.10]{BRW05}, the identity $I_G: G \rightarrow G$ yields an injective lift $\widetilde{I_G}|_{[G, G]}:[G, G] \lra \wt G_\infty$.  
\end{proof}

\begin{prop}
Suppose that $G$ is a circularly-orderable group that is not left-orderable.  If $H_2(G; \ZZ) = 0$ and $H_1(G; \ZZ)$ is cyclic of order $n$, then 
\[ \Ob(G) \subseteq \{ d \in \mathbb{N}_{\geq 2} \mid \gcd(n, d) \neq 1 \}.
\]
\end{prop}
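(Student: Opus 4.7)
The plan is to reduce the statement to Proposition \ref{prop:obs_facts}(1) by computing $H^2(G;\ZZ)$ via the universal coefficient theorem. The key observation is that the hypotheses on the low-dimensional homology of $G$ completely pin down the second cohomology as an abelian group of known exponent, and then the arithmetic condition $\gcd(n,d)=1$ immediately gives $d \notin \Ob(G)$.

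More concretely, I would first apply UCT for cohomology:
\[
0 \lra \Ext(H_1(G;\ZZ),\ZZ) \lra H^2(G;\ZZ) \lra \Hom(H_2(G;\ZZ),\ZZ) \lra 0.
\]
Since $H_2(G;\ZZ) = 0$ by hypothesis, the $\Hom$-term vanishes, and since $H_1(G;\ZZ) \cong \ZZ/n\ZZ$, we obtain $H^2(G;\ZZ) \cong \Ext(\ZZ/n\ZZ,\ZZ) \cong \ZZ/n\ZZ$. In particular, $H^2(G;\ZZ)$ has exponent exactly $n$.

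Next I would verify that $n \geq 2$, so that Proposition \ref{prop:obs_facts} applies. If $n = 1$, then $H_1(G;\ZZ) = 0$, i.e.\ $G = [G,G]$. Combined with $H_2(G;\ZZ) = 0$, Lemma \ref{lemma:locommutator} says $[G,G]$ is left-orderable, so $G$ itself is left-orderable, contradicting our hypothesis. Hence $n \geq 2$.

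Finally, for any $d \in \NN_{\geq 2}$ with $\gcd(n,d) = 1$, Proposition \ref{prop:obs_facts}(1), applied to the circularly-orderable group $G$ whose $H^2(G;\ZZ)$ has exponent $n$, gives $d \notin \Ob(G)$. Taking the contrapositive yields the desired containment $\Ob(G) \subseteq \{d \in \NN_{\geq 2} \mid \gcd(n,d) \neq 1\}$. There is no genuine obstacle here once one identifies the exponent of $H^2(G;\ZZ)$; the only subtlety is ruling out $n=1$, and that is handled cleanly by Lemma \ref{lemma:locommutator}.
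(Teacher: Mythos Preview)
Your proof is correct, but it takes a genuinely different route from the paper's. The paper does not compute $H^2(G;\ZZ)$ at all. Instead, it uses Lemma~\ref{lemma:locommutator} in a central way: since $H_2(G;\ZZ)=0$, the commutator subgroup $[G,G]$ is left-orderable, and then for $\gcd(n,d)=1$ the quotient $H_1(G;\ZZ)\times\ZZ/d\ZZ$ is cyclic, so a circular ordering of $G\times\ZZ/d\ZZ$ is built lexicographically from
\[
1 \lra [G,G] \lra G\times\ZZ/d\ZZ \lra H_1(G;\ZZ)\times\ZZ/d\ZZ \lra 1.
\]
By contrast, you invoke the universal coefficient theorem to identify $H^2(G;\ZZ)\cong\ZZ/n\ZZ$ and then appeal to Proposition~\ref{prop:obs_facts}(1); Lemma~\ref{lemma:locommutator} enters only to exclude the degenerate case $n=1$. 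Your argument is shorter and ties the result directly to the cohomological machinery of Theorem~\ref{cor:main}, while the paper's argument is deliberately constructive---it exhibits the circular ordering on $G\times\ZZ/d\ZZ$ explicitly, which is the point emphasized in the paragraph immediately following the proposition.
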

\begin{proof}
First, observe that since $G$ satisfies the hypotheses of Lemma \ref{lemma:locommutator}, we know that $[G, G]$ is left-orderable.  Therefore if $H_1(G; \ZZ)$ is cyclic of order $n$ and $\gcd(n, d) = 1$ we can construct a circular ordering of $G \times \ZZ / d \ZZ$ lexicographically from the short exact sequence
\[ 1 \lra [G, G] \lra G \times \ZZ / d\ZZ  \lra H_1(G; \ZZ) \times \ZZ / d\ZZ \lra 1,
\]
since $H_1(G; \ZZ) \times \ZZ / d\ZZ$ is cyclic.  
\end{proof}

In particular when $M$ (as in Proposition \ref{3manifolds}) has cyclic first homology of order $d$, it is via short exact sequences that we can construct the circular orderings of $\pi_1(M) \times \ZZ/n\ZZ$ for each $n \notin \Ob(\pi_1(M))$ with $\gcd(d,n)=1$ that witness $n$ as being outside of the obstruction spectrum.

\section{Direct products}\label{sec:direct-products}

\subsection{The direct product of two circularly-orderable groups}

In this section, we show how to use the conditions developed in the previous sections to circularly-order various direct products of two groups. Note that $\Ob(G)$ has the structure of a poset, where $n \leq m$ if and only if $n$ divides $m$.  For such a poset, we denote the set of minimal elements by $\min( \Ob(G))$.

We begin with the case of one factor being a circularly-orderable group which admits a bi-invariant circular ordering. A {\it bi-invariant circular ordering} on a group $G$ is a circular ordering $c:G^3 \to \{0,\pm1\}$ such that $c(g_1,g_2,g_3) = c(hg_1,hg_2,hg_3) = c(g_1h,g_2h,g_3h)$ for all $g_1,g_2,g_3,h \in G$. Similarly, a {\it bi-orderable group} is a group admitting a strict total ordering that is invariant under group multiplication from both the right and left.

Recall from \cite{BCGpreprint}, the {\it torsion part} of the obstruction spectrum of a circularly-orderable group $G$ is defined to be
\[
\Ob_T(G) = \{n \in \NN_{\geq 2} \mid \text{there exists } g\in G \text{ such that } g^k = id \text{ and } \gcd(k,n) \neq 1\}.
\]
Note that $\Ob_T(G)$ is a subset of $\Ob(G)$, and it is all the elements of the obstruction spectrum that detect the presence of torsion in $G$.

When $G$ admits a bi-invariant circular ordering, we have $\Ob_T(G) = \Ob(G)$ \cite[Proposition 4.1]{BCGpreprint}. In particular, $\min(\Ob(G))$ consists entirely of primes, and for every prime $p \in \min(\Ob(G))$, there exists an element $g \in G$ of order $p$.

\begin{thm}\label{thm:biCO-product}
Let $A$ be a group admitting a bi-invariant circular ordering, and let $G$ be a circularly-orderable group with the property that $\min(\Ob(G))$ consists only of prime numbers. Then $G \times A$ is circularly-orderable if and only if $\min(\Ob(G)) \cap \min(\Ob(A)) = \emptyset$.
\end{thm}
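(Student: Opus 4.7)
Suppose $G \times A$ is circularly-orderable and let $p \in \min(\Ob(G)) \cap \min(\Ob(A))$. By hypothesis $p$ is prime, and because $A$ is bi-CO the equality $\Ob_T(A) = \Ob(A)$ \cite[Proposition 4.1]{BCGpreprint} forces the existence of $a \in A$ of exact order $p$. The subgroup $G \times \langle a \rangle \cong G \times \ZZ/p\ZZ$ of $G \times A$ is then circularly-orderable, contradicting $p \in \Ob(G)$.

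\textbf{Backward direction, setup.} Conversely, assume the intersection is empty. The spectrum $\Ob(G)$ is upward closed under divisibility (since $\ZZ/d\ZZ \hookrightarrow \ZZ/d'\ZZ$ when $d \mid d'$ yields an embedding $G \times \ZZ/d\ZZ \hookrightarrow G \times \ZZ/d'\ZZ$), so with $\min(\Ob(G))$ consisting only of primes, an integer $n$ lies in $\Ob(G)$ iff some prime divisor of $n$ lies in $\min(\Ob(G))$. The same applies to $A$ via $\Ob_T(A) = \Ob(A)$, with $\min(\Ob(A))$ equal to the set of primes dividing the order of some torsion element of $A$. Using that $A$ is bi-CO, the torsion elements of $A$ form a finite cyclic central subgroup $T \leq A$ with bi-orderable quotient $A/T$. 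Setting $m = |T|$, every prime divisor of $m$ lies in $\min(\Ob(A))$, hence outside $\min(\Ob(G))$, so $m \notin \Ob(G)$. Theorem \ref{cor:main} then provides a circular ordering $f_G$ of $G$ with $[f_G] \in H^2(G;\ZZ)$ divisible by $m$; say $[f_G] = m\mu$.

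\textbf{Backward direction, construction and obstacle.} View $G \times A$ as the central extension
$$1 \to T \to G \times A \to G \times (A/T) \to 1,$$
with $T$ embedded as $\{id_G\} \times T$ (central since $T$ is central in $A$); its cohomology class $\beta \in H^2(G \times (A/T); \ZZ/m\ZZ)$ is the pullback along the projection $p_{A/T}: G \times (A/T) \to A/T$ of the class $\alpha \in H^2(A/T;\ZZ/m\ZZ)$ representing $1 \to T \to A \to A/T \to 1$. The quotient $G \times (A/T)$ is circularly-orderable via the lex construction on $1 \to A/T \to G \times (A/T) \to G \to 1$ (bi-orderable kernel, circularly-ordered quotient), and Proposition \ref{prop:finite-CO-quotient} applied to $A$ and $T$ furnishes a circular ordering $\bar f$ on $A/T$ with $\fP_m([\bar f]) = \alpha$. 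If we can produce a circular ordering $f'$ on $G \times (A/T)$ with $\fP_m([f']) = \beta$, Proposition \ref{prop:circularly-orderable-finite-extension} identifies the displayed extension with $\wt{(G \times (A/T))}_m$, which is circularly-orderable. The principal obstacle is constructing this $f'$: the naive lex circular ordering has class $m \cdot p_G^* \mu$, whose reduction modulo $m$ is zero, and so matches $\beta$ only when $\alpha = 0$, i.e., when the extension $A \to A/T$ splits as $A \cong T \times L$ with $L$ bi-orderable. My plan is either to establish this splitting as a structural consequence of bi-CO (after which $(G \times T) \times L = G \times A$ is circularly-orderable by a second lex construction using that $G \times T \cong G \times \ZZ/m\ZZ$ is circularly-orderable by Theorem \ref{cor:main}), or to twist the lex construction by the pullback of $\bar f$, using the bi-invariance of $f_A$ to verify the cocycle identities and check that the resulting class reduces to $\beta$ modulo $m$.
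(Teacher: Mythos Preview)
Your forward direction is correct and matches the paper's argument.

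The backward direction, however, is incomplete by your own admission, and more seriously it rests on a false structural claim. You assert that the torsion elements of a bi-CO group $A$ form a \emph{finite} cyclic central subgroup $T$, and then set $m = |T|$. But the torsion subgroup need not be finite: take $A = \QQ/\ZZ$ (or any infinite subgroup of $\QQ/\ZZ$), which is abelian hence bi-CO, yet every element is torsion. So $m = |T|$ is undefined, the class $\mu$ with $[f_G] = m\mu$ cannot be produced, and the whole cohomological setup collapses. Your two proposed plans---establishing a splitting $A \cong T \times L$, or twisting the lexicographic cocycle---both presuppose that $m$ is a well-defined integer, so neither can be carried out as stated.

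The paper sidesteps this by first reducing to finitely generated subgroups (circular orderability is a local property). Using \v{S}wierczkowski's embedding $A \hookrightarrow \Gamma \times S^1$ with $\Gamma$ bi-orderable, any finitely generated subgroup of $A$ sits inside some $F \times \ZZ/m\ZZ$ with $F$ bi-orderable and $m$ the order of a torsion element of $A$. Then every prime dividing $m$ lies in $\min(\Ob(A))$, hence not in $\min(\Ob(G))$, so $m \notin \Ob(G)$ by the prime-minimality hypothesis. Now $G \times \ZZ/m\ZZ$ is circularly-orderable, and a single lexicographic step using the bi-orderable factor $F$ finishes the job. This is both simpler than your cohomological route and avoids the finiteness obstacle entirely; the reduction to finitely generated subgroups is the key idea you are missing.
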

\begin{proof}
Suppose $p \in \min(\Ob(G)) \cap \min(\Ob(A))$. Then since $A$ admits a bi-invariant circular ordering, $A$ contains an element of order $p$. Therefore $G \times A$ contains a subgroup isomorphic to $G \times \ZZ/p\ZZ$, which is not circularly-orderable, so we conclude $G \times A$ is not circularly-orderable.

Conversely, suppose $\min(\Ob(G)) \cap \min(\Ob(A)) = \emptyset$, and let $H$ be an arbitrary finitely-generated subgroup of $G \times A$. It suffices to show that $H$ is circularly-orderable, since if every finitely-generated subgroup of a group is circularly-orderable, the group is circularly-orderable \cite[Lemma 2.14]{Clay}.

Since $A$ admits a bi-invariant circular ordering, $A$ is a subgroup of $\Gamma \times S^1$, where $\Gamma$ is a bi-orderable group \cite{Sw59}. Therefore, every finitely-generated subgroup of $A$ is contained in a subgroup of the form $F \times \ZZ/m\ZZ$ for some $m \in \NN$, where $F$ is a bi-orderable group. We now have that $H$ is contained in a subgroup of $G \times A$ of the form $G \times F \times \ZZ/m\ZZ$, where $F$ is a bi-orderable group and $m$ is the order of some element of $A$. If $G \times \ZZ/m\ZZ$ is circularly-orderable, then we may conclude by a lexicographic construction that $H$ is circularly-orderable. Thus, it suffices to show that $G \times \ZZ/m\ZZ$ is circularly-orderable. 

If $m = 1$ we are done, so assume $m \geq 2$ and let $p$ be a prime dividing $m$. Note that for all such $p$, since $p \in \min(\Ob(A))$ the group $G \times \ZZ/p\ZZ$ is circularly-orderable. Therefore, if $G \times \ZZ/m\ZZ$ were not circularly-orderable, there would be an element of $\min(\Ob(G))$ that divides $m$ but is not prime, contradicting the assumption on $G$. We may finally conclude $G \times \ZZ/m\ZZ$ is circularly-orderable.
\end{proof}

From Corollary \ref{cor:mod-ob}, for the mapping class group $\Mod(S_{g,1})$, we have that $\min(\Ob(\Mod(S_{g,1})))$ consists only of prime numbers (in fact it is equal to the set of all prime numbers). The next corollary follows immediately.

\begin{cor}\label{cor:MCG-biCO-product}
Let $G$ be a group admitting a bi-invariant circular ordering, but not a left ordering. Then for $g \geq 2$, $\Mod(S_{g,1}) \times G$ is not circularly-orderable. 
\end{cor}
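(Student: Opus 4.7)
The plan is to verify the hypotheses of Theorem \ref{thm:biCO-product} with $A = G$ (the bi-invariantly circularly-ordered factor) and the mapping class group playing the role of the non-bi-invariant factor, and then apply the (easier) forward direction of that theorem to deduce non-circular-orderability.

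First I would check that $\Ob(G)$ is nonempty, so that $\min(\Ob(G))$ makes sense and contains something. Since $G$ admits a circular ordering by assumption but not a left ordering, Theorem \ref{thm:motivation} immediately gives $\Ob(G) \neq \emptyset$. Next, I would use the fact, recalled just before Theorem \ref{thm:biCO-product} (and citing \cite[Proposition 4.1]{BCGpreprint}), that when $G$ admits a bi-invariant circular ordering one has $\Ob_T(G) = \Ob(G)$; in particular every element of $\min(\Ob(G))$ is prime. So I may pick any prime $p \in \min(\Ob(G))$.

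Now I would invoke Corollary \ref{cor:mod-ob}, which yields $\Ob(\Mod(S_{g,1})) = \NN_{\geq 2}$ for $g \geq 2$, so that $\min(\Ob(\Mod(S_{g,1})))$ is precisely the set of all primes. In particular, $\min(\Ob(\Mod(S_{g,1})))$ consists only of primes, which is the hypothesis on $G$ in Theorem \ref{thm:biCO-product} (with the roles of the two groups exchanged). Moreover, the prime $p$ chosen above lies in both $\min(\Ob(G))$ and $\min(\Ob(\Mod(S_{g,1})))$, so the intersection is nonempty. By the easy direction of Theorem \ref{thm:biCO-product}, $\Mod(S_{g,1}) \times G$ is therefore not circularly-orderable.

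The proof is essentially a one-line combination of the two preceding results, so there is no real obstacle; the only subtlety is checking that the hypotheses of Theorem \ref{thm:biCO-product} are being applied to the correct factor (the $A$ in the theorem is the bi-invariantly circularly-ordered group, namely our $G$), and noting that the conclusion is symmetric in the two factors since $\Mod(S_{g,1}) \times G \cong G \times \Mod(S_{g,1})$.
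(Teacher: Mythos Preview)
Your proposal is correct and follows essentially the same approach as the paper: the paper simply notes that by Corollary \ref{cor:mod-ob} the set $\min(\Ob(\Mod(S_{g,1})))$ is exactly the set of all primes, and then declares the corollary immediate from Theorem \ref{thm:biCO-product}. Your write-up is just a more detailed unpacking of that immediate deduction, including the careful identification of which factor plays the role of $A$ in Theorem \ref{thm:biCO-product}.
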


Since $\min(\Ob(G))$ consists only of primes for any group $G$ admitting a bi-invariant circular ordering, we obtain the next corollary.

\begin{cor}\label{cor:biCO-product}
Let $G$ and $H$ be groups admitting a bi-invariant circular ordering. Then $G \times H$ is circularly-orderable if and only if $\min(\Ob(G)) \cap \min(\Ob(H)) = \emptyset$.
\end{cor}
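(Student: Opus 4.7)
The plan is to derive this as a direct application of Theorem \ref{thm:biCO-product}, with $A=H$. To invoke that theorem I only need to verify the hypothesis that $\min(\Ob(G))$ consists entirely of prime numbers, since the other required hypotheses (that $H$ admits a bi-invariant circular ordering and that $G$ is circularly-orderable) are given. The conclusion of Theorem \ref{thm:biCO-product} is exactly the biconditional we want.

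To verify the primality hypothesis, I would first invoke \cite[Proposition 4.1]{BCGpreprint}, which gives $\Ob_T(G) = \Ob(G)$ whenever $G$ admits a bi-invariant circular ordering. Then, given any $n \in \min(\Ob(G))$, I would argue as follows: by definition of $\Ob_T(G)$ there exists $g \in G$ of finite order $k$ with $\gcd(k,n) \neq 1$. Picking a common prime divisor $p$ of $k$ and $n$, the element $g^{k/p}$ has order exactly $p$, so $p \in \Ob_T(G) = \Ob(G)$. Since $p$ divides $n$ and both lie in $\Ob(G)$, the minimality of $n$ forces $p=n$, so $n$ is prime. Hence $\min(\Ob(G))$ consists only of primes, as needed.

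There is no substantial obstacle: once the primality of elements of $\min(\Ob(G))$ is established, the corollary is immediate from Theorem \ref{thm:biCO-product} applied symmetrically (one could equally well apply it with the roles of $G$ and $H$ swapped, since both factors satisfy the same hypothesis). The whole argument should take only a few lines.
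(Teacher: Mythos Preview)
Your proposal is correct and follows exactly the paper's approach: the paper derives the corollary immediately from Theorem \ref{thm:biCO-product} after noting (in the discussion preceding that theorem) that $\min(\Ob(G))$ consists only of primes for any group admitting a bi-invariant circular ordering. Your explicit verification of the primality claim via $\Ob_T(G)=\Ob(G)$ just spells out what the paper takes for granted from the earlier paragraph.
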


Corollary \ref{cor:biCO-product} suggests that there may be a way to characterize, via obstruction spectra, when the direct product of two circularly-orderable groups is circularly-orderable. The next example shows this is not possible.

\begin{eg}\label{eg:Prom1}
Let $G$ be the Promislow group, also known as the fundamental group of the Hantzsche-Wendt manifold. It was shown in \cite[Example 4.14]{BCGpreprint} that $G$ is circularly-orderable, and $\Ob(G) = 4\NN$. In particular, $G \times \ZZ/2\ZZ$ is circularly-orderable, whereas $G \times \ZZ/4\ZZ$ is not. However, $\Ob(\ZZ/2\ZZ) = \Ob(\ZZ/4\ZZ)$, so there is no way to characterize when the direct product of two circularly-orderable groups is circularly-orderable in terms of the obstruction spectra of the two groups. This example also shows that the assumption on the set of minimal elements in Theorem \ref{thm:biCO-product} cannot be dropped.
\end{eg}

A characterization of when the direct product of two circularly-orderable groups is circularly-orderable remains tantilizingly out of reach.

\subsection{Iterated direct products}

We finish the paper by investigating circular orderability of iterated direct products of a circularly-orderable group with itself.   Let us first record the following special case of Corollary \ref{cor:biCO-product}, which follows from the fact that a circularly-orderable group $G$ is left orderable if and only if $\Ob(G) = \emptyset$.

\begin{prop}\label{prop:iterated-biCO}
Let $G$ be a group admitting a bi-invariant circular ordering. Then $G \times G$ is circularly-orderable if and only if $G$ is left-orderable.
\end{prop}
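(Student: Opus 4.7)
The plan is to derive this as a quick corollary of Corollary \ref{cor:biCO-product}, which is the heart of the matter.

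The forward direction is immediate and does not use the bi-invariant hypothesis: if $G$ is left-orderable, then $G \times G$ is left-orderable via the lexicographic construction on the split exact sequence $1 \to G \to G \times G \to G \to 1$, and every left-orderable group is circularly-orderable (as noted in the introduction via the one-point compactification of $\RR$).

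For the reverse direction, I would simply apply Corollary \ref{cor:biCO-product} with $H = G$, which is legitimate because $G$ admits a bi-invariant circular ordering by hypothesis. The corollary then tells us that $G \times G$ is circularly-orderable if and only if $\min(\Ob(G)) \cap \min(\Ob(G)) = \emptyset$, that is, if and only if $\min(\Ob(G)) = \emptyset$. Since divisibility on $\NN_{\geq 2}$ is well-founded (every nonempty subset has a minimal element, for instance by choosing an element with the fewest prime factors counted with multiplicity), $\min(\Ob(G)) = \emptyset$ is equivalent to $\Ob(G) = \emptyset$. By Theorem \ref{thm:motivation}, $\Ob(G) = \emptyset$ is precisely the condition that $G$ be left-orderable, completing the proof.

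There is no genuine obstacle here, since all the content has been packaged into Corollary \ref{cor:biCO-product} and Theorem \ref{thm:motivation}. The only minor point worth noting explicitly in the write-up is the equivalence $\min(\Ob(G)) = \emptyset \iff \Ob(G) = \emptyset$, which uses well-foundedness of the divisibility order on $\NN_{\geq 2}$.
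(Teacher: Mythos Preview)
Your proof is correct and matches the paper's approach exactly: the paper also records this proposition as a special case of Corollary~\ref{cor:biCO-product} (with $H=G$), combined with the fact that $\Ob(G)=\emptyset$ is equivalent to left-orderability. Your explicit mention of well-foundedness to pass from $\min(\Ob(G))=\emptyset$ to $\Ob(G)=\emptyset$ is a clean way to spell out the one step the paper leaves implicit.
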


In a lecture given by the second author, Dani Wise asked if
 Proposition \ref{prop:iterated-biCO} might hold for general circularly-orderable groups. We now provide a counterexample.

\begin{eg}\label{eg:Prom2}
Let $G$ be the Promislow group, which we used in Example \ref{eg:Prom1}. This group admits the presentation $G = \langle a, b \mid ab^2a^{-1}b^2, ba^2b^{-1}a^2 \rangle$. There is a homomorphism $\phi : G \rightarrow \ZZ / 2\ZZ$ given by $\phi(a) = 1, \phi(b) = 0$, whose kernel is isomorphic to a semidirect product $(\ZZ \times \ZZ) \ltimes \ZZ$ and is thus left-orderable.  Therefore, $G$ is circularly-orderable by a lexicographic argument.  The group $G \times \ZZ /2 \ZZ$ is also circularly-orderable, since the obstruction spectrum of $G$ is $4 \NN$ by \cite[Example 4.14]{BCGpreprint}.

Thus, the short exact sequence 
\[ 1 \longrightarrow K \longrightarrow G \times G \stackrel{\id \times \phi}{\longrightarrow} G \times \ZZ /2 \ZZ \longrightarrow 1
\]
yields a lexicographic circular-ordering of $G \times G$. Finally, note that since $\Ob(G) \neq \emptyset$, $G$ is not left-orderable.
\end{eg}

Intriguingly, since the Promislow group $G$ is amenable (it contains an abelian subgroup of finite index), the next proposition shows that there exists a $n \geq 3$ such that $G^n$ is not circularly-orderable.

\begin{prop}
\label{eventuallynonco}
Assume $G$ is a finitely generated circularly-orderable amenable group which is not left-orderable.  If $G/G'$ is finite, then there exists $n \geq 2$ such that the $n$-fold direct product $G\times \dots \times G$ is not circularly-orderable.
\end{prop}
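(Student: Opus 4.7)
The plan is to combine amenability of $G^n$ with the finiteness of $H_1(G^n;\ZZ) = (G/G')^n$: the first forces the cohomology class of any circular ordering on $G^n$ to be torsion, while the second confines this torsion to a fixed finite abelian group, inside of which cycling through many ``partial diagonal'' copies of $G$ builds a zero-sum-free sequence whose length is bounded by pigeonhole.

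I would first observe that since $G^n$ is amenable, $H^2_b(G^n;\RR) = 0$, and hence the comparison $H^2_b(G^n;\ZZ) \to H^2(G^n;\ZZ) \to H^2(G^n;\RR)$ factors through zero. This shows $[f] \in H^2(G^n;\ZZ)$ is torsion for any circular ordering $f$ of $G^n$. Since $G/G'$ is finite, $H^1(G;\ZZ) = \Hom(G/G',\ZZ) = 0$, and the universal coefficient theorem identifies $T := H^2(G;\ZZ)_{\mathrm{tors}}$ with $G/G'$. Each pullback $\pi_i^*:H^2(G;\ZZ) \to H^2(G^n;\ZZ)$ is injective (being split by $\iota_i^*$); combined with the order computation $|H^2(G^n;\ZZ)_{\mathrm{tors}}| = |\Ext((G/G')^n,\ZZ)| = |G/G'|^n$, this yields $H^2(G^n;\ZZ)_{\mathrm{tors}} = \bigoplus_{i=1}^n \pi_i^* T$, and so $[f] = \sum_{i=1}^n \pi_i^*\alpha_i$ for unique $\alpha_i \in T$.

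For each nonempty $S \subseteq \{1,\ldots,n\}$, I would consider the homomorphism $\iota_S:G \to G^n$ sending $g$ to the tuple having $g$ in position $i$ when $i \in S$ and the identity elsewhere. The pullback $f \circ (\iota_S \times \iota_S)$ is a circular ordering on $G$, and since $\pi_j \iota_S$ equals $\id_G$ for $j \in S$ and is trivial otherwise, functoriality yields $\iota_S^*[f] = \sum_{j \in S}\alpha_j$. Because $G$ is not left-orderable, Remark \ref{rmk:id-implies-lo} forces $\iota_S^*[f] \neq 0$, so every nonempty subset sum of the sequence $\alpha_1,\ldots,\alpha_n$ is nonzero in $T$.

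Finally, applying the pigeonhole principle to the $n+1$ partial sums $0,\alpha_1,\alpha_1+\alpha_2,\ldots,\sum_{i=1}^n\alpha_i \in T$ --- any two coinciding partial sums would produce a zero consecutive subset sum --- gives $n+1 \leq |T| = |G/G'|$. Taking $n = |G/G'|$ therefore violates this inequality and produces the required contradiction, showing that the $|G/G'|$-fold direct product $G \times \cdots \times G$ is not circularly-orderable. The main obstacle I anticipate is the Künneth/universal coefficient decomposition of the torsion subgroup used in the second step; the vanishing $H^1(G;\ZZ) = 0$ kills the cross terms that would otherwise spoil the splitting, so that the clean order comparison closes the argument.
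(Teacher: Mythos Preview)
Your argument is correct and takes a genuinely different route from the paper's.  The paper exploits amenability via the rotation-number homomorphism: any circular ordering on $G^{n}$ fits into a short exact sequence $1\to K\to G^{n}\xrightarrow{\mathrm{rot}} C\to 1$ with $K$ left-orderable and $C$ cyclic, and then pigeonholes the coordinate restrictions $\mathrm{rot}\circ\phi_i$ among the finitely many cyclic quotients of $G$ to find enough equal ones that a suitable diagonal copy of $G$ lands in $K$.  You instead use amenability through the vanishing of $H^2_b(G^n;\RR)$, which forces $[f]$ to be torsion; the UCT identification $H^2(G^n;\ZZ)_{\mathrm{tors}}\cong\Ext((G/G')^n,\ZZ)$ together with the splitting by the coordinate retractions gives the decomposition $[f]=\sum_i\pi_i^*\alpha_i$ with $\alpha_i\in T\cong G/G'$, after which the problem becomes a zero-sum-free sequence question in $T$ handled by the partial-sum pigeonhole.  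Your approach is more in keeping with the cohomological viewpoint of the paper, avoids citing the external lexicographic-structure result for amenable groups, and yields the clean explicit bound $n=|G/G'|$ (indeed at least as sharp as the paper's $n=me$).  The paper's approach is more elementary in that it needs no bounded cohomology or universal coefficients, only the rotation-number structure theorem, and its pigeonhole is on homomorphisms rather than on cohomology classes.
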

\begin{proof}
Since $G/G'$ is finite, $G$ has only finitely many cyclic quotients.  Suppose they are $\{C_i\}_{i=1}^m$, and let $e$ denote the exponent of the abelian group $\bigoplus_{i=1}^m C_i$. 

Suppose the $me$-fold direct product $G^{me}$ is circularly-orderable, and fix a chosen circular ordering of it.  Then as $G$ is amenable, so is $G^{me}$, which means that the chosen circular ordering of $G^{me}$ arises lexicographically from a short exact sequence
\[ 1 \longrightarrow K \longrightarrow G^{me} \stackrel{\mathrm{rot}}{\longrightarrow} C \rightarrow 1,
\]
where $K$ is left-orderable, $C$ is cyclic, and the map $\mathrm{rot}:G^{me} \rightarrow C$ is the rotation number homomorphism induced by the circular ordering of $G^{me}$ \cite[Proposition 4.8]{BCGpreprint}.   Let $\phi_i: G \rightarrow G^{me}$ denote the inclusion map $g \mapsto (id, \dots, g, \dots, id)$, where the non-identity term appears in the $i$-th position.

Choose indices $j_1, j_2, \ldots, j_e$ such that $\mathrm{rot} \circ \phi_{j_a} =\mathrm{rot} \circ \phi_{j_b}$ for all $a, b \in \{1, \ldots, e\}$. This is possible because there are only $m$ cyclic quotients of $G$.   For ease of exposition, suppose that $j_1 = 1, j_2 = 2, \ldots, j_e = e$.  Then consider the inclusion $\iota: G \rightarrow G^{me}$ defined by $g \mapsto (g,  \ldots, g, id, \ldots, id)$, where the first $e$ terms are equal to $g$ and the rest are the identity.  Now we compute that for every $g \in G$
\[ \mathrm{rot} \circ \iota (g) = \sum_{i=1}^e \mathrm{rot} \circ \phi_i (g) = 0,
\]
since $\mathrm{rot} \circ \phi_{i}(g)$ are all equal and $e$ was chosen to be the exponent of $\bigoplus_{i=1}^m C_i$.  Thus, $\iota(G)$ is contained in the kernel of $\mathrm{rot}$, which is not possible since $G$ is not left-orderable.
\end{proof}

With this in mind, we end with the following open question:

\begin{quest}
Does there exist a non-left-orderable group $G$ such that $G^n$ is circularly-orderable for all $n \in \NN$?
\end{quest}

\bibliographystyle{plain}
\bibliography{rigidobstruction}
\end{document}